\documentclass[11pt]{article}

\usepackage[T1]{fontenc}
\usepackage[utf8]{inputenc}
\usepackage[margin=1in]{geometry}
\usepackage{amsmath,amssymb,amsthm,mathtools}
\usepackage{algorithm}
\usepackage[noend]{algpseudocode}
\usepackage{graphicx}
\usepackage{url}
\usepackage[colorlinks=true,linkcolor=blue,citecolor=blue,urlcolor=blue]{hyperref}

\newcommand{\R}{\mathbb{R}}
\newcommand{\ip}[2]{\left\langle #1,#2\right\rangle}
\newcommand{\norm}[1]{\left\lVert #1\right\rVert}
\newcommand{\set}[1]{\left\{#1\right\}}
\newcommand{\grad}{\nabla}
\newcommand{\Hess}{\nabla^2}
\newcommand{\dd}{\,\mathrm{d}}
\DeclareMathOperator{\argmin}{argmin}
\DeclareMathOperator{\dist}{dist}

\theoremstyle{plain}
\newtheorem{theorem}{Theorem}[section]
\newtheorem{lemma}[theorem]{Lemma}
\newtheorem{proposition}[theorem]{Proposition}

\theoremstyle{definition}
\newtheorem{definition}[theorem]{Definition}
\newtheorem{example}[theorem]{Example}
\begin{document}

\title{Continuous-Time Dynamics of the\\Difference-of-Convex Algorithm}

\author{Yi-Shuai Niu\\
\small Beijing Institute of Mathematical Sciences and Applications (BIMSA), Beijing, China\\
\small \texttt{niuyishuai@bimsa.cn}}

\date{}

\maketitle

\begin{abstract}
We study the dynamical systems structure underlying the difference-of-convex algorithm. For smooth DC decompositions with strongly convex component, we show that classical DCA admits an exact dual-coordinate interpretation as a full-step explicit Euler discretization of a nonlinear autonomous system. To pass to continuous time, we introduce a damped (under-relaxed) DCA variant and prove that its vanishing-step limit is a Hessian-Riemannian gradient flow generated by the convex component of the decomposition. For each fixed relaxation parameter, this damped DCA scheme also admits a Bregman-regularized interpretation together with monotone descent, asymptotic criticality, and KL convergence under standard boundedness assumptions. Under a metric DC-PL inequality, we further obtain a global linear rate for the damped scheme and show that the strongest provable global guarantee furnished by this estimate is attained by the half-relaxed choice, whereas local linearization near a nondegenerate minimum favors the full-step scheme. For the limiting flow, we establish an exact energy identity, asymptotic criticality of bounded trajectories, explicit global rate estimates under metric relative error bounds, finite-length and single-point convergence under a Kurdyka-Lojasiewicz hypothesis, and local exponential convergence near nondegenerate local minima. In particular, a metric DC-PL inequality yields exponential decay in function value and, under an additional quadratic-growth condition, in distance to the minimizing set. We also show that the convex component of a DC decomposition determines the geometry of the flow: different decompositions of the same objective lead to different continuous dynamics, and the local rate is governed by the induced metric-curvature pair. This yields a geometric criterion for assessing DC decomposition quality. Finally, we discuss how the dual-coordinate viewpoint connects DCA with Bregman proximal geometry and suggests a differential-inclusion route toward nonsmooth DC dynamics.
\end{abstract}

\noindent\textbf{Keywords.}
DCA, damped DCA, gradient flow, Hessian-Riemannian metric, Kurdyka-Lojasiewicz inequality, DC decomposition.

\medskip
\noindent\textbf{MSC 2020.}
90C26, 34A12, 90C30, 49M37, 49M05.

\section{Introduction}
\label{sec:intro}

Difference-of-convex (DC) programming studies objectives of the form
\begin{equation}
\label{eq:dc-problem}
\min_{x\in\R^n} f(x) := g(x)-h(x),
\end{equation}
where \(g\) and \(h\) are proper convex functions. The difference-of-convex algorithm (DCA), introduced and developed systematically by Pham Dinh Tao and Le Thi Hoai An, is one of the central algorithmic paradigms for such problems \cite{PhamLeThi1997,PhamLeThi1998,LeThiPham2018,LeThiDinhPham2018}. In its general, possibly nonsmooth form, DCA generates a sequence \(\{x_k\}\) by selecting
\(
y_k \in \partial h(x_k)
\)
and then solving the convex subproblem
\begin{equation}
\label{eq:basic-dca-general}
x_{k+1}\in \argmin_{x\in\R^n}\bigl\{g(x)-\ip{y_k}{x}\bigr\},
\end{equation}
or equivalently
\begin{equation}
\label{eq:basic-dca-subdiff}
y_k \in \partial h(x_k)\cap \partial g(x_{k+1}).
\end{equation}
Thus DCA linearizes the concave part \(-h\) at the current iterate and minimizes the resulting convex surrogate. In the smooth setting considered in most of this paper, \(\partial g=\{\grad g\}\) and \(\partial h=\{\grad h\}\), so the basic DCA step reduces to
\begin{equation}
\label{eq:basic-dca}
\grad g(x_{k+1}) = \grad h(x_k),
\end{equation}
which is the specialization of \eqref{eq:basic-dca-subdiff} to differentiable data.

The discrete theory of DCA is by now rich: one has monotonicity of objective values, asymptotic stationarity, rate guarantees under error-bound or KL-type assumptions, and increasingly refined Bregman interpretations \cite{LeThiPham2018,Niu2022,FaustFawziSaunderson2023,AbbaszadehpeivastiDeKlerkZamani2024}. What is much less developed is the \emph{continuous-time} viewpoint. For gradient-type and accelerated first-order methods, mirror descent, continuous Newton methods, and manifold optimization, the associated flows have clarified both geometry and asymptotics; see, for example, \cite{SuBoydCandes2016,KricheneBayenBartlett2015,Deuflhard2011,AbsilMahonySepulchre2008}. This raises the question of whether DCA admits a comparable continuous-time model.

The starting point of the analysis is that the appropriate state variable is not \(x\) but the \emph{dual coordinate}
\(
y = \grad g(x).
\)
In that variable, \eqref{eq:basic-dca} becomes the fixed-point iteration
\(
y_{k+1} = \grad h((\grad g)^{-1}(y_k)).
\)
To pass from the unit-step iteration to a small-step limit, we also introduce a \emph{damped (under-relaxed) DCA variant}. For each \(\eta\in(0,1]\), we denote by \(\{x_k^\eta\}\) the sequence generated by the damped update
\[
\grad g(x_{k+1}^{\eta}) = (1-\eta)\grad g(x_k^{\eta}) + \eta \grad h(x_k^{\eta}),
\qquad \eta\in(0,1].
\]
For fixed \(\eta\in(0,1)\), this damped DCA scheme is also of independent algorithmic interest: it admits a Bregman-regularized interpretation together with descent, asymptotic stationarity, KL convergence under boundedness assumptions, and a quantitative rate analysis under metric DC-PL conditions. The resulting picture already shows that there is no universal optimal relaxation parameter: the strongest provable global contraction guarantee within this analysis is attained by the half-relaxed choice, whereas the asymptotically fastest local linearized behavior near a nondegenerate minimum is obtained by the full-step scheme. Hence classical DCA is exactly the unit-step explicit Euler discretization of a nonlinear ODE in \(y\), while the damped DCA variant introduced here is its explicit Euler scheme with stepsize \(\eta\). Pulling this ODE back to primal coordinates produces
\begin{equation}
\label{eq:primal-flow-intro}
\Hess g(x)\dot x = \grad h(x)-\grad g(x) = -\grad f(x).
\end{equation}
This flow is not the Euclidean gradient flow of \(f\); it is the gradient flow of \(f\) in the state-dependent metric induced by the convex part \(g\). In other words, DCA is a metric method.

This reformulation has several consequences. First, \eqref{eq:primal-flow-intro} admits the exact energy identity
\[
\frac{\dd}{\dd t}f(x(t)) = -\dot x(t)^\top \Hess g(x(t))\dot x(t),
\]
so \(f\) is a Lyapunov function even though it may be nonconvex. This is the continuous-time counterpart of the monotonic decrease property that underlies the classical convergence analysis of DCA. Second, every bounded trajectory approaches the critical set, so omega-limit points are critical, in direct analogy with the asymptotic stationarity and criticality statements known for discrete DCA iterates. Third, once the gradient is bounded from below in the inverse-Hessian metric by a relative error bound, the same energy identity yields explicit global decay rates, with a clean exponential law under a metric DC-PL condition. On invariant regions where the Hessian metric is uniformly comparable to the Euclidean one, these metric rate conditions reduce to the usual global error-bound and Polyak-Lojasiewicz assumptions up to explicit distortion constants, which makes the effect of the decomposition on quantitative rates explicit. Fourth, once the trajectory is bounded, the usual dynamical-systems machinery based on Kurdyka-Lojasiewicz inequalities becomes available and yields finite length and convergence to a single critical point, mirroring the KL-based convergence paradigm developed for DCA. Here the KL property is the standard desingularizing condition that relates function-value gaps to gradient size near the limiting critical set. Accordingly, the flow provides a dynamical-systems interpretation of several standard convergence properties of DCA. Finally, the metric explicitly depends on the chosen decomposition \(f=g-h\), so two different DC splittings of the same objective lead to different flows. This sensitivity is a structural feature of DCA rather than an artifact of the analysis.

This dependence on the decomposition is also important on the algorithmic side. In practice, the quality of a DC decomposition strongly affects the behavior of DCA, yet rigorous criteria for comparing decompositions remain limited. The continuous-time framework developed here shows that decomposition quality is, at least in part, a question of metric quality: the choice of \(g\) determines the Hessian metric, this metric governs both dissipation and local rates, and different decompositions can therefore be compared through the geometry they induce.

\paragraph{Contributions.}
The main contributions of the paper are as follows.
\begin{enumerate}
\item We introduce a damped dual-coordinate variant of DCA, identify it as a Bregman-regularized DCA step with monotone descent, asymptotic stationarity, KL convergence under standard boundedness assumptions, and a metric-DC-PL linear rate bound, use it to derive a rigorous continuous limit, and show that classical DCA appears as the full-step Euler discretization of the same dual ODE. The same analysis also reveals that the relaxation-parameter choice has an intrinsic global-versus-local tradeoff: the half-relaxed scheme gives the strongest provable global guarantee furnished by the metric-DC-PL estimate, whereas the full-step scheme is locally fastest at the linearized level near nondegenerate minima.
\item We prove a metric energy identity, asymptotic criticality of bounded trajectories, explicit global rate estimates under metric relative error bounds together with their comparison to classical Euclidean error-bound and Polyak-Lojasiewicz conditions, KL-based finite-length convergence, and local exponential stability near nondegenerate local minima, thereby recovering in continuous time the main qualitative and quantitative convergence mechanisms familiar from DCA.
\item We isolate the role of the convex part \(g\) as a \emph{geometry generator}: it determines the Hessian-Riemannian metric, the instantaneous dissipation rate, the continuous path, and the local linearized rate through the spectrum of \( \Hess g(x^\star)^{-1}\Hess f(x^\star)\).
\item We show by theory and example that different DC decompositions of the same objective induce different continuous dynamics, and we use this to formulate a geometric criterion for DC decomposition quality in terms of metric alignment and curvature matching.
\item We explain how the dual-coordinate formulation interfaces naturally with Bregman proximal geometry and outline a route toward nonsmooth DC differential inclusions.
\end{enumerate}

The paper is organized as follows. Section~\ref{sec:dual} introduces the damped DCA variant, establishes its basic discrete properties, and derives the flow. Section~\ref{sec:analysis} studies Lyapunov decrease, asymptotic criticality, KL convergence, global rate estimates, and local exponential stability. Section~\ref{sec:geometry} discusses decomposition sensitivity, metric design, and the resulting interpretation of decomposition quality. Section~\ref{sec:nonsmooth} places the flow in a Bregman-geometric framework and discusses nonsmooth extensions.

\section{Dual-Coordinate Derivation of the Continuous DCA Flow}
\label{sec:dual}

Throughout the paper, except in the final outlook on nonsmooth extensions, we impose the standing assumptions
\begin{equation}
\label{eq:standing}
g,h \in C^2(\R^n), \qquad g \text{ is } \mu\text{-strongly convex for some } \mu>0.
\end{equation}
Write
\(
G(x) := \Hess g(x).
\)
Then \(G(x)\succeq \mu I\) for all \(x\in\R^n\). Since \(\grad g\) is strongly monotone and coercive, it is a global \(C^1\)-diffeomorphism from \(\R^n\) onto \(\R^n\).

\subsection{DCA as an Euler scheme in dual coordinates}

The discrete DCA update \eqref{eq:basic-dca} can be rewritten exactly after the change of variable \(y_k:=\grad g(x_k)\). For the continuous-time limit, we introduce in parallel a damped DCA family obtained by damping the dual variable with a parameter \(\eta\in(0,1]\). In primal coordinates, this means that for each \(\eta\in(0,1]\) the next iterate is defined by
\begin{equation}
\label{eq:relaxed-dca-primal}
\grad g(x_{k+1}^{\eta}) = (1-\eta)\grad g(x_k^{\eta}) + \eta \grad h(x_k^{\eta}).
\end{equation}
When \(\eta=1\), this reduces to the original DCA step. We record the resulting iteration in algorithmic form for later reference.

\begin{algorithm}[t]
\caption{Damped DCA}
\label{alg:damped-dca}
\begin{algorithmic}[1]
\Require Relaxation parameter \(\eta\in(0,1]\) and initial point \(x_0\in\R^n\)
\For{\(k=0,1,2,\dots\)}
\State Compute \(z_k \gets \grad h(x_k)\).
\State Compute \(x_{k+1}\) by
\Statex \hspace{\algorithmicindent}\(x_{k+1}\in\argmin_{x\in\R^n}
\set{
g(x)-\bigl\langle (1-\eta)\grad g(x_k)+\eta z_k,\,x\bigr\rangle
}.\)
\EndFor
\end{algorithmic}
\end{algorithm}

\begin{lemma}
\label{lem:dual-form}
Let \(T:\R^n\to\R^n\) be defined by
\begin{equation}
\label{eq:T-map}
T(y) := \grad h((\grad g)^{-1}(y)).
\end{equation}
Then the DCA iteration \eqref{eq:basic-dca} is equivalent to
\begin{equation}
\label{eq:dca-dual}
y_{k+1} = T(y_k).
\end{equation}
More generally, the damped DCA step \eqref{eq:relaxed-dca-primal} is equivalent in dual coordinates to
\begin{equation}
\label{eq:relaxed-dca-dual}
y_{k+1}^{\eta} = y_k^{\eta} + \eta\bigl(T(y_k^{\eta})-y_k^{\eta}\bigr).
\end{equation}
\end{lemma}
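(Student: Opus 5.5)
The plan is to argue directly from the fact, recorded after the standing assumptions \eqref{eq:standing}, that \(\grad g:\R^n\to\R^n\) is a global \(C^1\)-diffeomorphism. This fact makes the change of variables \(y=\grad g(x)\) a bijection between primal sequences and dual sequences. Given any primal sequence \(\{x_k\}\), I will set \(y_k:=\grad g(x_k)\). Conversely, given a dual sequence \(\{y_k\}\), I will set \(x_k:=(\grad g)^{-1}(y_k)\). These two assignments are mutually inverse, so it suffices to show that the update rules correspond term by term.

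For the undamped case, I substitute \(x_k=(\grad g)^{-1}(y_k)\) into \eqref{eq:basic-dca}. Its left-hand side is \(\grad g(x_{k+1})=y_{k+1}\). Its right-hand side is \(\grad h(x_k)=\grad h((\grad g)^{-1}(y_k))=T(y_k)\), which gives \eqref{eq:dca-dual}. Each step is an equality, and \((\grad g)^{-1}\) is single-valued, so the argument reverses. Also, \eqref{eq:basic-dca} determines \(x_{k+1}\) uniquely, because \(x_{k+1}=(\grad g)^{-1}(\grad h(x_k))\). Hence the two iterations generate corresponding sequences from corresponding initial points \(y_0=\grad g(x_0)\).

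For the damped step, the same substitution in \eqref{eq:relaxed-dca-primal} gives
\[
y^\eta_{k+1}=(1-\eta)y^\eta_k+\eta T(y^\eta_k),
\]
which rearranges to \eqref{eq:relaxed-dca-dual}. Taking \(\eta=1\) recovers the first statement.

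I will also note that the argmin in Algorithm~\ref{alg:damped-dca} is a singleton. Its objective is \(\mu\)-strongly convex, and its first-order condition is exactly \eqref{eq:relaxed-dca-primal}. So the algorithmic form, the primal update and the dual update all coincide.

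There is no real obstacle here. The only point needing care is well-posedness: the correspondence between sequences requires \((\grad g)^{-1}\) to be globally defined and single-valued. This is supplied by the strong monotonicity of \(\grad g\) (injectivity) together with surjectivity (via coercivity or Minty), as already stated after \eqref{eq:standing}.
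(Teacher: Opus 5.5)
Your proposal is correct and takes the same approach as the paper. You substitute \(y_k=\grad g(x_k)\), use the global invertibility of \(\grad g\) to rewrite both sides of \eqref{eq:basic-dca} and \eqref{eq:relaxed-dca-primal}, and rearrange; your remarks on the sequence-level correspondence and on the uniqueness of the argmin in Algorithm~\ref{alg:damped-dca} are correct and make explicit what the paper leaves implicit.
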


\begin{proof}
Set \(y_k=\grad g(x_k)\). Since \(\grad g\) is invertible, \(x_k=((\grad g)^{-1})(y_k)\). The DCA condition
\(
\grad g(x_{k+1})=\grad h(x_k)
\)
then becomes
\[
y_{k+1} = \grad h((\grad g)^{-1}(y_k)) = T(y_k).
\]
The relaxed formula follows by the same substitution:
\[
y_{k+1}^{\eta}
= (1-\eta)y_k^{\eta} + \eta \grad h((\grad g)^{-1}(y_k^{\eta}))
= y_k^{\eta} + \eta\bigl(T(y_k^{\eta})-y_k^{\eta}\bigr).
\]
\end{proof}

Lemma~\ref{lem:dual-form} immediately reveals an important fact: classical DCA is the explicit Euler method with stepsize \(1\) applied to the autonomous system
\begin{equation}
\label{eq:dual-ode}
\dot y = T(y)-y = \grad h((\grad g)^{-1}(y)) - y.
\end{equation}
Moreover, \eqref{eq:relaxed-dca-dual} can be rewritten as
\[
\frac{y_{k+1}^{\eta}-y_k^{\eta}}{\eta} = T(y_k^{\eta})-y_k^{\eta},
\]
which is exactly the forward (explicit) Euler discretization of \eqref{eq:dual-ode} with timestep \(\eta\). In particular, the original DCA corresponds to the full-step choice \(\eta=1\), while the damped family provides the small-step regime needed for a continuous-time limit.

\subsection{The continuous-time limit}

For fixed \(\eta>0\), let \(\{y_k^{\eta}\}_{k\ge 0}\) be generated by \eqref{eq:relaxed-dca-dual}, and define the piecewise affine interpolation
\[
y^{\eta}(t)
= y_k^{\eta} + \frac{t-k\eta}{\eta}\bigl(y_{k+1}^{\eta}-y_k^{\eta}\bigr),
\qquad t\in[k\eta,(k+1)\eta].
\]
Set \(x^{\eta}(t):=((\grad g)^{-1})(y^{\eta}(t))\).

\begin{theorem}
\label{thm:continuous-limit}
Let \(S>0\). Suppose the solution \(y(\cdot)\) of \eqref{eq:dual-ode} with \(y(0)=\grad g(x_0)\) stays in a compact subset of \(\R^n\) on \([0,S]\). Then, as \(\eta\downarrow 0\), the interpolants \(y^{\eta}\) converge uniformly on \([0,S]\) to \(y\), and the primal interpolants \(x^{\eta}\) converge uniformly to the unique solution \(x\) of
\begin{equation}
\label{eq:continuous-dca}
G(x(t))\dot x(t) = \grad h(x(t))-\grad g(x(t)) = -\grad f(x(t)),
\qquad x(0)=x_0.
\end{equation}
\end{theorem}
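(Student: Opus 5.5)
The plan is to use the standard convergence argument for the explicit Euler method, applied to the dual ODE \eqref{eq:dual-ode} with vector field \(F(y):=T(y)-y\), and then to transfer the result to primal coordinates through the diffeomorphism \((\grad g)^{-1}\). First I record the regularity of \(F\). By the standing assumptions \eqref{eq:standing}, \(\grad g\) is a \(C^1\)-diffeomorphism, and \(D(\grad g)^{-1}(y)=G((\grad g)^{-1}(y))^{-1}\) has operator norm at most \(1/\mu\). Since \(\grad h\) is \(C^1\), the map \(T\) is \(C^1\). Hence \(F\) is \(C^1\) and therefore locally Lipschitz, so \eqref{eq:dual-ode} has a unique solution through \(y(0)=\grad g(x_0)\).

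Let \(K\) be the compact set containing \(y([0,S])\), and put \(K_1:=\set{z:\dist(z,K)\le 1}\). On \(K_1\), let \(L\) be a Lipschitz constant of \(F\) and \(M:=\sup_{K_1}\norm{F}\). I then run the usual local-error and global-error induction. Because \(y\) is \(C^1\) and \(F\) is Lipschitz on \(K_1\), the local truncation error is
\[
\norm{y((k+1)\eta)-y(k\eta)-\eta F(y(k\eta))}\le\int_{k\eta}^{(k+1)\eta}\norm{F(y(s))-F(y(k\eta))}\dd s\le L M\eta^2 .
\]
Write \(e_k:=\norm{y_k^\eta-y(k\eta)}\). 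As long as \(y_k^\eta\in K_1\), the recursion gives \(e_{k+1}\le(1+L\eta)e_k+LM\eta^2\). Since \(e_0=0\), a discrete Gronwall argument yields \(e_k\le M(e^{LS}-1)\eta\) for \(k\eta\le S\).

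The one point that needs care is that this estimate relies on \(F\) being Lipschitz only on \(K_1\). I will close the argument by a bootstrap. Take \(\eta\) small enough that \(M(e^{LS}-1)\eta<1\). Then by induction on \(k\), each new iterate \(y_{k+1}^\eta\) is within distance \(1\) of \(y((k+1)\eta)\in K\), so it lies in \(K_1\) and the recursion continues to hold. On each interval \([k\eta,(k+1)\eta]\), both the affine interpolant and \(y\) move by at most \(M\eta\). Combining this with the node errors gives \(\sup_{[0,S]}\norm{y^\eta-y}=O(\eta)\), which is the uniform convergence claimed for the dual interpolants. Near the final time I include the index \(k\) with \(k\eta\le S<(k+1)\eta\); for this I either extend \(y\) slightly past \(S\) or note that the error bound already covers it.

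For the primal statement, \((\grad g)^{-1}\) is globally \(1/\mu\)-Lipschitz, because \(\grad g\) is \(\mu\)-strongly monotone. Therefore \(\norm{x^\eta(t)-x(t)}\le\mu^{-1}\norm{y^\eta(t)-y(t)}\), where \(x:=(\grad g)^{-1}\circ y\). It remains to check that this \(x\) is the unique solution of \eqref{eq:continuous-dca}. The map \(x\) is \(C^1\) and satisfies \(\grad g(x(t))=y(t)\). Differentiating gives \(G(x)\dot x=\dot y=\grad h(x)-\grad g(x)\), with \(x(0)=x_0\). Conversely, any \(C^1\) solution \(\tilde x\) of \eqref{eq:continuous-dca} makes \(\grad g(\tilde x)\) a solution of \eqref{eq:dual-ode} with the same initial value, so uniqueness in the dual variable gives uniqueness in the primal variable. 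Equivalently, \eqref{eq:continuous-dca} can be written as \(\dot x=-G(x)^{-1}\grad f(x)\), whose right-hand side is locally Lipschitz only if \(G\) is; since the paper assumes only \(g\in C^2\), I will argue uniqueness via the dual variable rather than through this form.
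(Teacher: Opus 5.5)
Your proposal is correct and follows the paper's route: Euler convergence for the locally Lipschitz dual field \(T(y)-y\), then pull-back through \((\grad g)^{-1}\) and differentiation of \(\grad g(x(t))=y(t)\). You spell out what the paper only cites as the classical Euler convergence theorem (the truncation-error, discrete-Gronwall and bootstrap argument). You also tighten two points the paper leaves implicit: the global \(1/\mu\)-Lipschitz bound on \((\grad g)^{-1}\) gives an \(O(\eta)\) primal rate, and uniqueness for \eqref{eq:continuous-dca} comes from the dual variable, since \(-G^{-1}\grad f\) need not be locally Lipschitz when \(g\) is only \(C^2\).
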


\begin{proof}
Because \(g,h\in C^2\) and \((\grad g)^{-1}\) is \(C^1\), the vector field \(y\mapsto T(y)-y\) is locally Lipschitz. Therefore the ODE \eqref{eq:dual-ode} has a unique solution on \([0,S]\), and the explicit Euler scheme \eqref{eq:relaxed-dca-dual} converges uniformly to it on every compact time interval on which the exact solution remains bounded. This is the classical convergence theorem for Euler discretization of a locally Lipschitz ODE.

Now define \(x(t):=((\grad g)^{-1})(y(t))\). Since \(y(t)=\grad g(x(t))\), differentiation yields
\[
\dot y(t) = G(x(t))\dot x(t).
\]
Combining this identity with \eqref{eq:dual-ode}, we obtain
\[
G(x(t))\dot x(t)
= \grad h(x(t))-\grad g(x(t))
= -\grad f(x(t)),
\]
which is \eqref{eq:continuous-dca}. Uniform convergence of \(x^{\eta}\) follows from uniform convergence of \(y^{\eta}\) and continuity of \((\grad g)^{-1}\).
\end{proof}

Equation \eqref{eq:continuous-dca} will be the main object of study in the rest of the paper. It arises as the continuous-time model associated with DCA through the vanishing-step limit of the damped variant introduced above. The original DCA corresponds to the full-step Euler scheme in the dual geometry generated by \(g\).

\subsection{Discrete properties of the damped scheme}

The damped family is not merely a device for passing to continuous time. For each fixed relaxation parameter \(\eta\in(0,1)\), it defines a Bregman-regularized DCA step with its own descent mechanism and convergence properties.

For \(z,x\in\R^n\), define the Bregman divergence generated by \(g\) by
\begin{equation}
\label{eq:bregman-def}
D_g(z,x):=g(z)-g(x)-\ip{\grad g(x)}{z-x}.
\end{equation}

\begin{proposition}
\label{prop:relaxed-descent}
Fix \(\eta\in(0,1)\), and let \(\{x_k\}\) be generated by the damped DCA step \eqref{eq:relaxed-dca-primal}. Then \(x_{k+1}\) solves
\begin{equation}
\label{eq:relaxed-proximal}
x_{k+1}\in \argmin_{x\in\R^n}
\Bigl\{
\eta\bigl[g(x)-h(x_k)-\ip{\grad h(x_k)}{x-x_k}\bigr]
+(1-\eta)D_g(x,x_k)
\Bigr\}.
\end{equation}
Consequently,
\begin{equation}
\label{eq:relaxed-descent}
f(x_{k+1})+\frac{1-\eta}{\eta}D_g(x_{k+1},x_k)\le f(x_k).
\end{equation}
If \(g\) is \(\mu\)-strongly convex, then
\begin{equation}
\label{eq:relaxed-strong-descent}
f(x_k)-f(x_{k+1})
\ge \frac{(1-\eta)\mu}{2\eta}\norm{x_{k+1}-x_k}^2.
\end{equation}
In particular, if \(f\) is bounded below along \(\{x_k\}\), then \(\{f(x_k)\}\) is decreasing and
\[
\sum_{k=0}^{\infty} D_g(x_{k+1},x_k)<\infty,
\qquad
\sum_{k=0}^{\infty}\norm{x_{k+1}-x_k}^2<\infty.
\]
\end{proposition}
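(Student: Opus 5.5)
I will establish the variational characterization \eqref{eq:relaxed-proximal} first and then derive the descent inequality from it by comparing the objective at the minimizer with its value at \(x=x_k\).

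\textbf{Step 1: first-order characterization.} Denote the objective in \eqref{eq:relaxed-proximal} by \(\Phi_k(x)\). Expanding the Bregman divergence \eqref{eq:bregman-def} and dropping constants, \(\Phi_k\) differs by a constant from \(g(x)-\ip{(1-\eta)\grad g(x_k)+\eta\grad h(x_k)}{x}\), because the coefficients of \(g(x)\) add up to \(\eta+(1-\eta)=1\). This is exactly the subproblem of Algorithm~\ref{alg:damped-dca}. Since \(g\) is \(\mu\)-strongly convex, \(\Phi_k\) is strongly convex. It therefore has a unique minimizer, characterized by \(\grad g(x)=(1-\eta)\grad g(x_k)+\eta\grad h(x_k)\), which is \eqref{eq:relaxed-dca-primal}. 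Because \(\grad g\) is a bijection, \(x_{k+1}\) is precisely this minimizer.

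\textbf{Step 2: descent.} Minimality gives \(\Phi_k(x_{k+1})\le\Phi_k(x_k)=\eta[g(x_k)-h(x_k)]=\eta f(x_k)\), since \(D_g(x_k,x_k)=0\). Next I bound \(\Phi_k(x_{k+1})\) from below using convexity of \(h\): \(h(x_{k+1})\ge h(x_k)+\ip{\grad h(x_k)}{x_{k+1}-x_k}\). Hence the bracket evaluated at \(x_{k+1}\) is at least \(g(x_{k+1})-h(x_{k+1})=f(x_{k+1})\). This yields \(\eta f(x_{k+1})+(1-\eta)D_g(x_{k+1},x_k)\le \eta f(x_k)\), and dividing by \(\eta>0\) gives \eqref{eq:relaxed-descent}. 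Strong convexity of \(g\) gives \(D_g(z,x)\ge\frac{\mu}{2}\norm{z-x}^2\), and substituting this yields \eqref{eq:relaxed-strong-descent}.

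\textbf{Step 3: summability.} Because \(\eta<1\) and \(D_g\ge 0\), the sequence \(f(x_k)\) is nonincreasing. Summing \eqref{eq:relaxed-descent} from \(k=0\) to \(N\) telescopes to
\[
\frac{1-\eta}{\eta}\sum_{k=0}^N D_g(x_{k+1},x_k)\le f(x_0)-f(x_{N+1}).
\]
If \(f\) is bounded below along the iterates, the right-hand side stays bounded, so \(\sum_k D_g(x_{k+1},x_k)<\infty\). The series of squared steps then converges by the \(\mu/2\) bound. The only point needing care is that the factor \((1-\eta)/\eta\) is strictly positive, which is why the statement requires \(\eta<1\). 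The whole argument is routine, and I expect no real obstacle; the key observation is the Step 1 identification of the proximal objective with the Algorithm~\ref{alg:damped-dca} subproblem.
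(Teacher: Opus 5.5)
Your proposal is correct and follows essentially the same route as the paper. You identify the proximal objective with the damped subproblem up to an additive constant, then compare its values at \(x_{k+1}\) and \(x_k\) using convexity of \(h\), and finish by applying the \(\frac{\mu}{2}\norm{\cdot}^2\) lower bound on \(D_g\) and telescoping. The only cosmetic difference is that you invoke strong convexity to get a unique minimizer, whereas the paper only needs that the first-order condition characterizes minimizers of the convex function \(\Psi_k\).
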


\begin{proof}
For fixed \(k\), consider the function
\[
\Psi_k(x):=
\eta\bigl[g(x)-h(x_k)-\ip{\grad h(x_k)}{x-x_k}\bigr]
+(1-\eta)D_g(x,x_k).
\]
Using \eqref{eq:bregman-def}, one can rewrite \(\Psi_k\) up to an additive constant independent of \(x\) as
\[
g(x)-\ip{(1-\eta)\grad g(x_k)+\eta\grad h(x_k)}{x}.
\]
Therefore the optimality condition for minimizing \(\Psi_k\) is exactly
\[
\grad g(x_{k+1})=(1-\eta)\grad g(x_k)+\eta\grad h(x_k),
\]
which is \eqref{eq:relaxed-dca-primal}. This proves \eqref{eq:relaxed-proximal}.

Since \(h\) is convex,
\[
h(x)\ge h(x_k)+\ip{\grad h(x_k)}{x-x_k},
\]
and therefore
\[
g(x)-h(x_k)-\ip{\grad h(x_k)}{x-x_k}\ge f(x)
\qquad \forall x\in\R^n.
\]
Using \(\Psi_k(x_{k+1})\le \Psi_k(x_k)=\eta f(x_k)\), we obtain
\[
\eta f(x_{k+1})+(1-\eta)D_g(x_{k+1},x_k)
\le \Psi_k(x_{k+1})
\le \eta f(x_k),
\]
which is \eqref{eq:relaxed-descent}. Strong convexity of \(g\) gives
\[
D_g(x_{k+1},x_k)\ge \frac{\mu}{2}\norm{x_{k+1}-x_k}^2,
\]
and \eqref{eq:relaxed-strong-descent} follows. Summing \eqref{eq:relaxed-descent} and \eqref{eq:relaxed-strong-descent} over \(k\) gives the final claim.
\end{proof}

\begin{proposition}
\label{prop:relaxed-criticality}
Fix \(\eta\in(0,1)\), and let \(\{x_k\}\) be generated by \eqref{eq:relaxed-dca-primal}. Suppose that \(\{x_k\}\) is bounded. Then
\[
\norm{x_{k+1}-x_k}\to 0,
\qquad
\norm{\grad f(x_k)}\to 0.
\]
In particular, every cluster point of \(\{x_k\}\) is a critical point of \(f\).
\end{proposition}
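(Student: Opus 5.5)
The plan is to combine the summability from Proposition~\ref{prop:relaxed-descent} with a Lipschitz estimate on the compact set containing the iterates.

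\emph{Step 1: vanishing steps.} Since \(\{x_k\}\) is bounded, it lies in a compact set \(K\). Because \(f=g-h\) is continuous, \(f\) is bounded below on \(K\), hence along the sequence. Proposition~\ref{prop:relaxed-descent} then gives \(\sum_k\norm{x_{k+1}-x_k}^2<\infty\), and therefore \(\norm{x_{k+1}-x_k}\to 0\).

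\emph{Step 2: the gradient identity.} Rewrite the damped update \eqref{eq:relaxed-dca-primal} as
\[
\grad g(x_{k+1})-\grad g(x_k)=\eta\bigl(\grad h(x_k)-\grad g(x_k)\bigr)=-\eta\,\grad f(x_k).
\]
Hence
\[
\norm{\grad f(x_k)}=\frac{1}{\eta}\norm{\grad g(x_{k+1})-\grad g(x_k)}.
\]
Let \(\widehat K\) be the convex hull of \(K\), which is compact and contains every segment \([x_k,x_{k+1}]\). Since \(g\in C^2\), the map \(\grad g\) is Lipschitz on \(\widehat K\) with constant \(L:=\max_{\widehat K}\norm{\Hess g}\). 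By the mean value inequality,
\[
\norm{\grad f(x_k)}\le \frac{L}{\eta}\norm{x_{k+1}-x_k}\to 0.
\]

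\emph{Step 3: cluster points.} Suppose \(x_{k_j}\to\bar x\). Continuity of \(\grad f\) gives \(\grad f(\bar x)=\lim_j \grad f(x_{k_j})=0\), so \(\bar x\) is critical.

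The only point that needs care is the Lipschitz bound in Step 2. It must be taken on a convex compact set containing the segments between consecutive iterates, and \(\widehat K\) serves this purpose. Note that \(\eta<1\) is used only in Step 1: the factor \((1-\eta)/\eta\) in \eqref{eq:relaxed-strong-descent} is positive only when \(\eta<1\).
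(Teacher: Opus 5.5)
Your proof is correct and follows the paper's argument step for step. You use the summability from Proposition~\ref{prop:relaxed-descent}, then the identity \(\grad g(x_{k+1})-\grad g(x_k)=-\eta\grad f(x_k)\) together with a Lipschitz bound on \(\grad g\), then continuity at cluster points. Passing to the convex hull is a harmless extra precaution: the paper takes the Lipschitz constant directly on a compact set containing the iterates, which is also valid because a \(C^1\) map is Lipschitz on every compact set.
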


\begin{proof}
Since \(\{x_k\}\) is bounded and \(f\) is continuous, \(f\) is bounded below along the sequence. Proposition~\ref{prop:relaxed-descent} therefore yields
\[
\sum_{k=0}^{\infty}\norm{x_{k+1}-x_k}^2<\infty,
\]
hence \(\norm{x_{k+1}-x_k}\to 0\).

Let \(K\subset\R^n\) be a compact set containing the whole sequence. Since \(g\in C^2\), the gradient \(\grad g\) is Lipschitz on \(K\); let \(L_g>0\) be a corresponding Lipschitz constant. From \eqref{eq:relaxed-dca-primal},
\[
\grad g(x_{k+1})-\grad g(x_k)
=\eta\bigl(\grad h(x_k)-\grad g(x_k)\bigr)
=-\eta\grad f(x_k).
\]
Therefore
\[
\eta\norm{\grad f(x_k)}
=\norm{\grad g(x_{k+1})-\grad g(x_k)}
\le L_g\norm{x_{k+1}-x_k}\to 0.
\]
This proves \(\grad f(x_k)\to 0\). Any cluster point \(x^\star\) then satisfies \(\grad f(x^\star)=0\) by continuity of \(\grad f\).
\end{proof}

\section{Lyapunov Analysis and Long-Time Behavior}
\label{sec:analysis}

\subsection{Energy identity and metric gradient structure}

The ODE \eqref{eq:continuous-dca} is naturally expressed in the metric generated by \(G(x)\).

\begin{proposition}
\label{prop:energy}
Let \(x(\cdot)\) solve \eqref{eq:continuous-dca} on an interval \([0,\tau)\). Then
\begin{equation}
\label{eq:energy-identity}
\frac{\dd}{\dd t}f(x(t))
= -\grad f(x(t))^\top G(x(t))^{-1}\grad f(x(t))
= -\dot x(t)^\top G(x(t))\dot x(t)
\le 0.
\end{equation}
Hence \(f\) is a Lyapunov function for the flow. Equivalently, \eqref{eq:continuous-dca} is the gradient flow of \(f\) in the Riemannian metric
\[
\ip{u}{v}_{x} := u^\top G(x)v.
\]
For any symmetric positive-definite matrix \(A\), we write \(\norm{v}_{A}^2:=v^\top Av\).
\end{proposition}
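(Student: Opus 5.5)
The plan is a direct chain-rule computation, with the invertibility of \(G\) supplied by the standing assumption \eqref{eq:standing}. Since \(G(x)\succeq \mu I\) for every \(x\), the matrix \(G(x(t))\) is symmetric positive definite and hence invertible. I will first rewrite \eqref{eq:continuous-dca} in explicit form,
\[
\dot x(t) = -G(x(t))^{-1}\grad f(x(t)),
\]
which is legitimate along any solution on \([0,\tau)\). Here \(x(\cdot)\) is \(C^1\) because \(\dot x\) is a continuous function of \(x\): \(G\) is continuous, and inversion is continuous on the positive-definite cone.

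Next I will differentiate \(f\circ x\). As \(f=g-h\in C^2\), the chain rule gives \(\tfrac{\dd}{\dd t}f(x(t))=\grad f(x(t))^\top\dot x(t)\). Substituting the explicit form of \(\dot x\) produces the first expression \(-\grad f^\top G^{-1}\grad f\).

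For the second expression, I will write \(\grad f = -G\dot x\) and use the symmetry of \(G\). This gives
\[
\grad f^\top G^{-1}\grad f=\dot x^\top G G^{-1}G\dot x=\dot x^\top G\dot x .
\]
Positive definiteness then yields \(\dot x^\top G\dot x\ge \mu\norm{\dot x}^2\ge 0\). This proves \eqref{eq:energy-identity}, and hence \(f(x(t))\) is nonincreasing, so \(f\) is a Lyapunov function.

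For the gradient-flow interpretation, I will recall that the Riemannian gradient \(\operatorname{grad}_G f(x)\) is characterized by
\[
\ip{\operatorname{grad}_G f(x)}{v}_x=\grad f(x)^\top v \quad \text{for all } v.
\]
With \(\ip{u}{v}_x=u^\top G(x)v\), this characterization reads \(G(x)\operatorname{grad}_G f(x)=\grad f(x)\), so \(\operatorname{grad}_G f=G^{-1}\grad f\). Therefore \eqref{eq:continuous-dca} is exactly \(\dot x=-\operatorname{grad}_G f(x)\). The metric is a genuine smooth Riemannian metric because \(G\) is continuous, symmetric, and uniformly positive definite; it is \(C^0\), and \(C^1\) if \(g\in C^3\), though continuity suffices for the definition used here. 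In the same terms, the energy identity says \(\tfrac{\dd}{\dd t}f=-\norm{\dot x}_{G}^2=-\norm{\grad f}_{G^{-1}}^2\).

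I do not expect a real obstacle. The only point needing care is justifying differentiability of \(t\mapsto x(t)\) and the invertibility of \(G\) along the trajectory, and both follow from \eqref{eq:standing}.
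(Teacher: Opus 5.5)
Your proposal is correct and follows the paper's proof essentially verbatim: the chain rule with the substitution $\dot x=-G^{-1}\grad f$, then the identity $\grad f=-G\dot x$ for the second equality, and finally the identification of $G(x)^{-1}\grad f(x)$ as the Riemannian gradient for $\ip{u}{v}_x=u^\top G(x)v$. The only slip is cosmetic: you call the metric ``smooth'' and then correctly note that under \eqref{eq:standing} it is only $C^0$, which is all the argument needs.
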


\begin{proof}
Using \eqref{eq:continuous-dca},
\[
\frac{\dd}{\dd t}f(x(t))
= \ip{\grad f(x(t))}{\dot x(t)}
= -\grad f(x(t))^\top G(x(t))^{-1}\grad f(x(t)).
\]
Since \(G(x)\succ 0\), the right-hand side is nonpositive. The identity
\(
\grad f = -G\dot x
\)
then gives the second equality in \eqref{eq:energy-identity}. The Riemannian gradient associated with the metric \(\ip{\cdot}{\cdot}_x\) is \(G(x)^{-1}\grad f(x)\), so the flow is precisely \(\dot x=-\mathrm{grad}_{G}f(x)\).
\end{proof}

Proposition~\ref{prop:energy} yields a useful metric interpretation:
\begin{equation}
\label{eq:metric-speed}
\frac{\dd}{\dd t}f(x(t)) = -\norm{\dot x(t)}_{G(x(t))}^2.
\end{equation}
Thus the instantaneous decay of the objective equals the squared speed of the trajectory in the Hessian metric of \(g\).

\subsection{Asymptotic criticality of bounded trajectories}

The energy identity implies integrability of the metric speed, and with a mild additional regularity assumption one recovers asymptotic stationarity.

\begin{proposition}
\label{prop:criticality}
Assume \(g,h\in C^3(\R^n)\), and let \(x(\cdot)\) be a bounded global solution of \eqref{eq:continuous-dca}. If \(f\) is bounded below on the trajectory, then
\begin{equation}
\label{eq:l2-speed}
\int_0^{+\infty} \norm{\dot x(t)}_{G(x(t))}^2 \dd t < +\infty,
\end{equation}
and
\begin{equation}
\label{eq:grad-goes-zero}
\lim_{t\to+\infty}\norm{\grad f(x(t))}=0.
\end{equation}
Consequently, every cluster point of \(x(t)\) is a critical point of \(f\).
\end{proposition}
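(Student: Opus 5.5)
The plan is to combine the energy identity of Proposition~\ref{prop:energy} with a Barbalat-type uniform continuity argument. The $C^3$ assumption is there to make the integrand Lipschitz in time.

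\textbf{Step 1 (integrability).} Integrate \eqref{eq:energy-identity} over $[0,T]$ to get
\[
f(x(0))-f(x(T))=\int_0^T \norm{\dot x(t)}_{G(x(t))}^2\dd t .
\]
Since $f$ is bounded below on the trajectory, the left side is bounded uniformly in $T$. Letting $T\to\infty$ gives \eqref{eq:l2-speed}, because the integrand is nonnegative. By the first equality in \eqref{eq:energy-identity}, the same integrand equals
\[
\phi(t):=\grad f(x(t))^\top G(x(t))^{-1}\grad f(x(t)).
\]

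\textbf{Step 2 (uniform continuity of $\phi$).} Let $K$ be a compact set containing the trajectory. On $K$, the map $x\mapsto G(x)^{-1}$ is $C^1$, because $g\in C^3$ and $G\succeq\mu I$. The map $\grad f=\grad g-\grad h$ is also $C^1$ on $K$, since $f\in C^3$. Hence $\Phi(x):=\grad f(x)^\top G(x)^{-1}\grad f(x)$ is $C^1$ on a neighborhood of $K$ and therefore Lipschitz on $K$. Moreover $\dot x=-G(x)^{-1}\grad f(x)$ is bounded on $K$ by $\mu^{-1}\sup_K\norm{\grad f}$, so $t\mapsto x(t)$ is Lipschitz. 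Composing, $\phi=\Phi\circ x$ is Lipschitz on $[0,\infty)$, and in particular uniformly continuous. Since $\grad f$ is only $C^1$ under the standing $C^2$ assumption, this is the step where the $C^3$ hypothesis is used. Alternatively, local Lipschitz continuity of $\Phi$ on $K$ already suffices, which in fact only needs $C^2$. Either way this step is routine, and I do not expect a serious obstacle.

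\textbf{Step 3 (Barbalat).} A nonnegative, uniformly continuous, integrable function on $[0,\infty)$ tends to zero. I would give the short argument rather than cite it. Suppose $\phi(t_j)\ge\varepsilon$ along some $t_j\to\infty$. Uniform continuity gives a $\delta>0$ with $\phi\ge\varepsilon/2$ on each interval $[t_j,t_j+\delta]$. Passing to a subsequence, these intervals are disjoint, and each contributes at least $\varepsilon\delta/2$ to the integral, contradicting Step 1. Hence $\phi(t)\to0$.

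\textbf{Conclusion.} On $K$ we have $G(x)\preceq L_K I$ with $L_K:=\max_K\norm{G}$. Therefore $\phi(t)\ge L_K^{-1}\norm{\grad f(x(t))}^2$, which gives \eqref{eq:grad-goes-zero}. Finally, if $x(t_j)\to x^\star$, continuity of $\grad f$ yields $\grad f(x^\star)=0$, so every cluster point is critical.

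The main point needing care is the uniform continuity in Step 2, specifically the uniform bound on $\dot x$ over the compact set. Once the trajectory's boundedness is used there, the remaining steps are routine.
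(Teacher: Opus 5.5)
Your proof is correct and follows essentially the same route as the paper: integrate the energy identity, deduce uniform continuity of \(\grad f^\top G^{-1}\grad f\) along the trajectory from the boundedness of \(x\) and \(\dot x\) (the paper uses a bounded derivative, you use a Lipschitz composition), apply Barbalat's lemma, and finish with \(G\preceq MI\) on the trajectory. One small correction to your aside: with only \(C^2\) data, \(G^{-1}\) (and hence \(\Phi\)) is merely continuous, not locally Lipschitz, so the \(C^3\) hypothesis is really needed for \(G^{-1}\), not for \(\grad f\). Your conclusion that \(C^2\) suffices is still right, because a continuous \(\Phi\) is uniformly continuous on a compact ball containing the trajectory and \(x(\cdot)\) is Lipschitz in time.
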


\begin{proof}
By Proposition~\ref{prop:energy}, the function \(t\mapsto f(x(t))\) is decreasing and bounded below, hence it converges to some \(f_\infty\in\R\). Integrating \eqref{eq:metric-speed} gives
\[
\int_0^{+\infty} \norm{\dot x(t)}_{G(x(t))}^2 \dd t
= f(x(0))-f_\infty < +\infty.
\]
This proves \eqref{eq:l2-speed}.

Because the trajectory is bounded and \(g,h\in C^3\), the sets
\(
\{x(t):t\ge 0\}
\)
and
\(
\{\dot x(t):t\ge 0\}
\)
are bounded, and the map
\[
w(t):=\grad f(x(t))^\top G(x(t))^{-1}\grad f(x(t))
= \norm{\dot x(t)}_{G(x(t))}^2
\]
has bounded derivative. Hence \(w\) is uniformly continuous on \([0,+\infty)\). Since \(w\ge 0\) and \(w\in L^1(0,+\infty)\), Barbalat's lemma implies \(w(t)\to 0\).

Now boundedness of the trajectory and continuity of \(G\) imply the existence of \(M>0\) such that \(G(x(t))\preceq MI\) for all \(t\ge 0\). Therefore
\[
w(t)
= \grad f(x(t))^\top G(x(t))^{-1}\grad f(x(t))
\ge \frac{1}{M}\norm{\grad f(x(t))}^2.
\]
Thus \eqref{eq:grad-goes-zero} holds. Any cluster point \(x^\star\) must then satisfy \(\grad f(x^\star)=0\).
\end{proof}

\subsection{KL convergence and finite-length trajectories}

The energy identity becomes especially powerful when \(f\) satisfies a Kurdyka-Lojasiewicz (KL) inequality on the omega-limit set, which is the natural setting for tame, subanalytic, and real-analytic objectives \cite{Kurdyka1998,BolteDaniilidisLewisShiota2007,AttouchBolteSvaiter2013}. Recall that the KL property is a local desingularization principle: near a critical point, the gap \(f(x)-f^\star\) controls the size of \(\grad f(x)\) after composition with a suitable increasing concave function. This is precisely the mechanism that converts mere asymptotic criticality into finite-length convergence.

\begin{definition}
\label{def:kl}
Let \(f:\R^n\to\R\) be differentiable, and let \(x^\star\in\R^n\) satisfy \(\grad f(x^\star)=0\). We say that \(f\) has the \emph{Kurdyka-Lojasiewicz (KL) property} at \(x^\star\) if there exist \(\varepsilon>0\), a neighborhood \(U\) of \(x^\star\), and a concave function
\[
\varphi\in C^1\bigl((0,\varepsilon)\bigr)\cap C\bigl([0,\varepsilon)\bigr),
\qquad
\varphi(0)=0,\quad \varphi'>0,
\]
such that
\begin{equation}
\label{eq:kl-def}
\varphi'(f(x)-f(x^\star))\norm{\grad f(x)} \ge 1
\end{equation}
for every \(x\in U\) satisfying \(f(x^\star)< f(x) < f(x^\star)+\varepsilon\).
\end{definition}

\begin{theorem}
\label{thm:kl}
Let \(x(\cdot)\) be a bounded global solution of \eqref{eq:continuous-dca}. Assume \(g,h\in C^3(\R^n)\), \(f\) is bounded below, and \(f\) satisfies the KL property at every point of the omega-limit set
\[
\omega(x_0):=\bigcap_{s\ge 0}\overline{\set{x(t):t\ge s}}.
\]
Then the trajectory \(x(\cdot)\) has finite Euclidean length:
\begin{equation}
\label{eq:finite-length}
\int_0^{+\infty}\norm{\dot x(t)}\dd t < +\infty.
\end{equation}
In particular, \(x(t)\) converges to a single critical point \(x^\star\) of \(f\).
\end{theorem}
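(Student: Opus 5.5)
The plan is the standard KL argument for gradient-like flows (Łojasiewicz, Absil–Mahony–Andrews, Attouch–Bolte), adapted to the metric \(G\). On the bounded trajectory, continuity of \(G\) and \(G\succeq\mu I\) give constants \(0<\mu\le M\) with \(\mu I\preceq G(x(t))\preceq MI\) for all \(t\ge0\). First, I set up the limit data. By Proposition~\ref{prop:energy}, \(f(x(t))\) is nonincreasing, and since \(f\) is bounded below it tends to some \(f_\infty\). The set \(\omega(x_0)\) is nonempty, compact and connected, because the trajectory is bounded and continuous. By Proposition~\ref{prop:criticality} it consists of critical points, and \(f\equiv f_\infty\) on it by continuity. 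If \(f(x(t_0))=f_\infty\) for some finite \(t_0\), then the energy identity forces \(\dot x\equiv0\) afterwards, and the claim is trivial. So I may assume \(f(x(t))>f_\infty\) for all \(t\).

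Second, I uniformize the KL property, which I expect to be the main technical obstacle. Each point of \(\omega(x_0)\) has a KL neighborhood \(U_i\), constant \(\varepsilon_i\) and desingularizer \(\varphi_i\). I will cover the compact set \(\omega(x_0)\) by finitely many such neighborhoods. I will then take \(\varepsilon=\min\varepsilon_i\) and \(\varphi=\sum_i\varphi_i\). This sum is still concave, \(C^1\) on \((0,\varepsilon)\) and satisfies \(\varphi(0)=0\). Moreover \(\varphi'\ge\varphi_i'\), so \(\varphi'(f(x)-f_\infty)\norm{\grad f(x)}\ge1\) holds on a \(\delta\)-neighborhood \(V\) of \(\omega(x_0)\) whenever \(f_\infty<f(x)<f_\infty+\varepsilon\). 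This uses the Lebesgue number of the cover, and the fact that all points of \(\omega(x_0)\) share the value \(f_\infty\). Since \(\dist(x(t),\omega(x_0))\to0\), a standard compactness argument for bounded trajectories, there is a \(T\) such that for \(t\ge T\) the point \(x(t)\) lies in \(V\) and \(f(x(t))<f_\infty+\varepsilon\).

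Third, I run the desingularized energy estimate. For \(t\ge T\) set \(r(t)=f(x(t))-f_\infty>0\). Using \eqref{eq:energy-identity} together with \(\norm{\grad f}=\norm{G\dot x}\le M\norm{\dot x}\) and \(\dot x^\top G\dot x\ge \mu\norm{\dot x}^2\), I get
\[
-\frac{\dd}{\dd t}\varphi(r(t))=\varphi'(r)\,\dot x^\top G\dot x\ge \frac{\dot x^\top G\dot x}{\norm{\grad f(x)}}\ge \frac{\mu\norm{\dot x}^2}{M\norm{\dot x}}=\frac{\mu}{M}\norm{\dot x(t)}.
\]
This holds wherever \(\dot x\ne0\), and trivially wherever \(\dot x=0\). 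Integrating over \([T,s]\) gives \(\int_T^s\norm{\dot x}\dd t\le \frac{M}{\mu}\varphi(r(T))\) uniformly in \(s\). This proves \eqref{eq:finite-length}.

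Finally, finite length makes \(x(t)\) Cauchy as \(t\to\infty\), since \(\norm{x(t)-x(s)}\le\int_s^t\norm{\dot x}\). Hence \(x(t)\to x^\star\), and \(x^\star\) is critical by Proposition~\ref{prop:criticality}. If one wishes to avoid the uniformization step, an alternative is to fix any \(x^\star\in\omega(x_0)\), use a single KL neighborhood of \(x^\star\), and show that the trajectory cannot leave it after some time (the usual trapping argument). I would use the compact-cover version since it is cleaner.
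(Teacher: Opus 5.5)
Your proposal is correct and follows essentially the same route as the paper: uniformize the KL inequality on a neighborhood of the compact set \(\omega(x_0)\), derive \(-\frac{\dd}{\dd t}\varphi(f(x(t))-f_\infty)\ge c\,\norm{\dot x(t)}\) from the energy identity and the two-sided bounds on \(G\), integrate, and conclude via the Cauchy property and Proposition~\ref{prop:criticality}. The differences are minor: you prove the uniformization explicitly (finite cover, \(\varphi=\sum_i\varphi_i\)) and handle the case \(f(x(t_0))=f_\infty\), which the paper tacitly excludes, while your constant \(\mu/M\) is slightly weaker than the paper's \(\sqrt{\mu/M}\), which is immaterial.
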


\begin{proof}
By Proposition~\ref{prop:energy}, \(f(x(t))\downarrow f_\infty\). Since the trajectory is bounded, its omega-limit set is nonempty, compact, and connected. Standard uniformization of the KL inequality on compact sets implies that there exist \(\varepsilon>0\), a neighborhood \(U\) of \(\omega(x_0)\), and a concave function
\[
\varphi\in C^1\bigl((0,\varepsilon)\bigr)\cap C\bigl([0,\varepsilon)\bigr),
\qquad
\varphi(0)=0,\quad \varphi'>0,
\]
such that
\begin{equation}
\label{eq:uniform-kl}
\varphi'(f(x)-f_\infty)\norm{\grad f(x)} \ge 1
\end{equation}
whenever \(x\in U\) and \(f_\infty < f(x) < f_\infty+\varepsilon\).

For \(t\) large enough, \(x(t)\in U\) and \(f_\infty < f(x(t)) < f_\infty+\varepsilon\). On the compact tail of the trajectory there exist constants \(0<\mu\le M\) such that
\(
\mu I \preceq G(x(t)) \preceq MI
\)
for all large \(t\). Using \eqref{eq:uniform-kl}, Proposition~\ref{prop:energy}, and the identity
\(
\norm{\dot x}_{G}^2 = -\frac{\dd}{\dd t}f(x(t)),
\)
we obtain for all large \(t\),
\begin{align*}
-\frac{\dd}{\dd t}\varphi(f(x(t))-f_\infty)
&= \varphi'(f(x(t))-f_\infty)\norm{\dot x(t)}_{G(x(t))}^2 \\
&\ge \frac{\norm{\dot x(t)}_{G(x(t))}^2}{\norm{\grad f(x(t))}}.
\end{align*}
Because \(\grad f(x) = -G(x)\dot x\),
\[
\norm{\grad f(x(t))}
\le \norm{G(x(t))}^{1/2}\norm{\dot x(t)}_{G(x(t))}
\le \sqrt{M}\,\norm{\dot x(t)}_{G(x(t))}.
\]
Therefore
\[
-\frac{\dd}{\dd t}\varphi(f(x(t))-f_\infty)
\ge \frac{1}{\sqrt{M}}\norm{\dot x(t)}_{G(x(t))}
\ge \sqrt{\frac{\mu}{M}}\,\norm{\dot x(t)}.
\]
Integrating over \([t_0,+\infty)\) for \(t_0\) sufficiently large yields
\[
\int_{t_0}^{+\infty}\norm{\dot x(t)}\dd t
\le \sqrt{\frac{M}{\mu}}\,
\varphi\bigl(f(x(t_0))-f_\infty\bigr)
<+\infty.
\]
This proves \eqref{eq:finite-length}.

Finite length implies that \(x(\cdot)\) is a Cauchy curve in \(\R^n\), hence \(x(t)\to x^\star\) for some \(x^\star\in\R^n\). Proposition~\ref{prop:criticality} then gives \(\grad f(x^\star)=0\).
\end{proof}

Theorem~\ref{thm:kl} shows that the Hessian metric induced by \(g\) is fully compatible with the standard tame-convergence paradigm: once the metric is uniformly equivalent to the Euclidean one on bounded sets, the usual KL machinery applies with only minor modifications.

The same KL mechanism applies to the fixed-step damped scheme as well.

\begin{theorem}
\label{thm:relaxed-kl}
Fix \(\eta\in(0,1)\), and let \(\{x_k\}\) be generated by \eqref{eq:relaxed-dca-primal}. Assume that \(\{x_k\}\) is bounded and that \(f\) satisfies the KL property at every point of the cluster set of \(\{x_k\}\). Then
\[
\sum_{k=0}^{\infty}\norm{x_{k+1}-x_k}<\infty.
\]
In particular, \(\{x_k\}\) converges to a single critical point of \(f\).
\end{theorem}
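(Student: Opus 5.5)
We follow the standard Attouch--Bolte--Svaiter scheme, combining the sufficient-decrease inequality of Proposition~\ref{prop:relaxed-descent} with a relative-error bound coming from the update rule itself.

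\textbf{Step 1: the two basic inequalities.} Let \(K\) be a compact convex set containing \(\{x_k\}\), for instance the convex hull of its closure, and let \(L_g\) be a Lipschitz constant of \(\grad g\) on \(K\). Proposition~\ref{prop:relaxed-descent} gives the sufficient-decrease estimate
\[
f(x_k)-f(x_{k+1})\ge a\norm{x_{k+1}-x_k}^2,\qquad a:=\tfrac{(1-\eta)\mu}{2\eta}.
\]
The update \eqref{eq:relaxed-dca-primal} gives \(\eta\grad f(x_k)=\grad g(x_k)-\grad g(x_{k+1})\). Hence we obtain the relative-error bound
\[
\norm{\grad f(x_k)}\le b\norm{x_{k+1}-x_k},\qquad b:=L_g/\eta.
\]
Note that the gradient is evaluated at \(x_k\), not \(x_{k+1}\). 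This is the only difference from the usual setup, and it is harmless.

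\textbf{Step 2: limit set and uniform KL.} By Proposition~\ref{prop:relaxed-criticality}, the cluster set \(\Omega\) is nonempty and compact and consists of critical points. It is also connected, since \(\norm{x_{k+1}-x_k}\to0\). The sequence \(f(x_k)\) decreases to some \(f_\infty\), and by continuity \(f\equiv f_\infty\) on \(\Omega\).

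If \(f(x_N)=f_\infty\) for some \(N\), then the descent inequality forces \(x_{k+1}=x_k\) for all \(k\ge N\), and we are done. Otherwise \(f(x_k)>f_\infty\) for all \(k\). Using the same uniformization of KL on the compact set \(\Omega\) invoked in Theorem~\ref{thm:kl}, we obtain \(\varepsilon\), a neighborhood \(U\supset\Omega\), and a single desingularizing function \(\varphi\). Since \(\dist(x_k,\Omega)\to0\), for all \(k\ge k_0\) we have \(x_k\in U\) and \(f_\infty<f(x_k)<f_\infty+\varepsilon\).

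\textbf{Step 3: the telescoping estimate.} This is the main step. Set \(\Delta_k:=\varphi(f(x_k)-f_\infty)-\varphi(f(x_{k+1})-f_\infty)\). Concavity of \(\varphi\), then KL at \(x_k\), then Step~1 give
\[
\Delta_k\ge\varphi'(f(x_k)-f_\infty)\bigl(f(x_k)-f(x_{k+1})\bigr)\ge\frac{a\norm{x_{k+1}-x_k}^2}{\norm{\grad f(x_k)}}.
\]
Here \(\grad f(x_k)\neq0\) by the KL inequality.

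This is the point to watch. Because the relative error is available at \(x_k\) rather than \(x_{k+1}\), we can bound the denominator by the \emph{same} increment: \(\norm{\grad f(x_k)}\le b\norm{x_{k+1}-x_k}\). This yields directly
\[
\norm{x_{k+1}-x_k}\le \tfrac{b}{a}\,\Delta_k.
\]
The usual AM--GM trick with \(\norm{x_k-x_{k-1}}\) is therefore not even needed. One must only check that \(x_{k+1}\neq x_k\) in this case; if equality held, Step~1 would force \(\grad f(x_k)=0\), contradicting KL.

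Summing over \(k\ge k_0\) telescopes to
\[
\sum_{k\ge k_0}\norm{x_{k+1}-x_k}\le \tfrac{b}{a}\,\varphi\bigl(f(x_{k_0})-f_\infty\bigr)<\infty.
\]
Thus \(\{x_k\}\) is Cauchy and converges to some \(x^\star\), which is critical by Proposition~\ref{prop:relaxed-criticality}.

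The main care is in Step~2, namely the uniformized KL on \(\Omega\) and ensuring the tail stays in \(U\). Those points are routine given boundedness.
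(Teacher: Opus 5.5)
Your proof is correct, and it takes a slightly different, more self-contained route than the paper. The paper puts the relative-error bound at the \emph{new} iterate. It rewrites $\grad f(x_{k+1})=(1-\eta)\grad f(x_k)+\grad h(x_k)-\grad h(x_{k+1})$ to get $\norm{\grad f(x_{k+1})}\le(\frac{1-\eta}{\eta}L_g+L_h)\norm{x_{k+1}-x_k}$, which also needs a Lipschitz constant for $\grad h$. It then cites the abstract Attouch--Bolte--Svaiter finite-length theorem, without reproducing the argument.

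You instead read the relative error at the \emph{current} iterate directly off the update, $\eta\grad f(x_k)=\grad g(x_k)-\grad g(x_{k+1})$, so only $L_g$ enters. Together with sufficient decrease, this gives a strong-descent condition $f(x_k)-f(x_{k+1})\ge\frac{a}{b}\norm{\grad f(x_k)}\norm{x_{k+1}-x_k}$ of Absil--Mahony--Andrews type. As a result, the KL telescoping closes in a single step, without the $2\sqrt{\alpha\beta}\le\alpha+\beta$ argument that the $x_{k+1}$-based framework needs.

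What your version buys is an explicit length bound, $\sum_{k\ge k_0}\norm{x_{k+1}-x_k}\le\frac{2L_g}{(1-\eta)\mu}\,\varphi(f(x_{k_0})-f_\infty)$, and independence from an external theorem; it also reflects the explicit-Euler structure of the scheme. The paper's route is shorter on the page and plugs into a general framework. In the ABS form, that framework needs KL only at one cluster point, rather than your uniformized KL over the whole cluster set; the hypothesis supplies the latter anyway.

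Your treatment of the degenerate case $f(x_N)=f_\infty$ is correct. So are your checks that $\grad f(x_k)\neq0$ and $x_{k+1}\neq x_k$ on the tail, and the concavity step with $\varphi$ continuous at $0$.
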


\begin{proof}
By Proposition~\ref{prop:relaxed-descent}, the sequence \(\{f(x_k)\}\) is decreasing and bounded below, hence convergent, and there exists
\[
a:=\frac{(1-\eta)\mu}{2\eta}>0
\]
such that
\begin{equation}
\label{eq:relaxed-sufficient-decrease}
f(x_k)-f(x_{k+1})\ge a\norm{x_{k+1}-x_k}^2
\qquad \forall k\ge 0.
\end{equation}
By Proposition~\ref{prop:relaxed-criticality}, \(\norm{x_{k+1}-x_k}\to 0\) and every cluster point of \(\{x_k\}\) is critical.

Let \(K\subset\R^n\) be a compact set containing the whole sequence. Since \(g,h\in C^2\), the gradients \(\grad g\) and \(\grad h\) are Lipschitz on \(K\); denote the corresponding Lipschitz constants by \(L_g\) and \(L_h\). From \eqref{eq:relaxed-dca-primal},
\begin{align*}
\grad f(x_{k+1})
&= \grad g(x_{k+1})-\grad h(x_{k+1}) \\
&= (1-\eta)\bigl(\grad g(x_k)-\grad h(x_k)\bigr)
+\grad h(x_k)-\grad h(x_{k+1}),
\end{align*}
hence
\begin{align*}
\norm{\grad f(x_{k+1})}
&\le \frac{1-\eta}{\eta}\norm{\grad g(x_{k+1})-\grad g(x_k)}
+\norm{\grad h(x_{k+1})-\grad h(x_k)} \\
&\le \Bigl(\frac{1-\eta}{\eta}L_g+L_h\Bigr)\norm{x_{k+1}-x_k}.
\end{align*}
Thus the sequence satisfies a standard relative-error condition of the form
\begin{equation}
\label{eq:relaxed-relative-error}
\norm{\grad f(x_{k+1})}\le b\norm{x_{k+1}-x_k}
\qquad \forall k\ge 0
\end{equation}
for some constant \(b>0\).

The sufficient decrease estimate \eqref{eq:relaxed-sufficient-decrease}, the relative-error bound \eqref{eq:relaxed-relative-error}, and the KL property on the cluster set place \(\{x_k\}\) exactly in the abstract framework of KL convergence theorems for descent methods; see, for example, \cite{AttouchBolteSvaiter2013,Niu2022}. Therefore the sequence has finite length and converges to a single cluster point \(x^\star\). Proposition~\ref{prop:relaxed-criticality} yields \(\grad f(x^\star)=0\).
\end{proof}

\subsection{Global rates under metric relative error bounds}

The KL theorem is qualitative unless the desingularizing function is quantified. For explicit global rates, a natural specialization is a relative error bound measured in the inverse-Hessian metric generated by \(g\). This is the continuous-time counterpart of rate assumptions used in discrete DCA analyses and in the Polyak-Lojasiewicz theory of gradient methods \cite{Niu2022,AbbaszadehpeivastiDeKlerkZamani2024,Polyak1963,KarimiNutiniSchmidt2016}.

\begin{definition}
\label{def:metric-reb}
Let \(\Omega\subset\R^n\) be positively invariant for \eqref{eq:continuous-dca}, and set
\[
f_\star:=\inf_{x\in\Omega} f(x).
\]
We say that \(f\) satisfies a \emph{metric relative error bound of exponent \(\theta\in[1/2,1)\)} on \(\Omega\) if there exists \(c>0\) such that
\begin{equation}
\label{eq:metric-reb}
\norm{\grad f(x)}_{G(x)^{-1}} \ge c\bigl(f(x)-f_\star\bigr)^\theta
\end{equation}
for every \(x\in\Omega\) with \(f(x)>f_\star\). In the special case \(\theta=\frac12\), equivalently
\begin{equation}
\label{eq:metric-pl}
\norm{\grad f(x)}_{G(x)^{-1}}^2 \ge 2\mu\bigl(f(x)-f_\star\bigr)
\qquad (2\mu=c^2),
\end{equation}
we say that \(f\) satisfies a \emph{metric DC-PL inequality} on \(\Omega\).
\end{definition}

This is a power-type quantified KL inequality written in the metric induced by \(g\), but imposed globally on the invariant set \(\Omega\) rather than only near a limiting critical point.

\begin{proposition}
\label{prop:metric-vs-euclidean-rates}
Assume that \(\theta\in[1/2,1)\) and that there exist constants \(0<m\le M\) such that
\[
mI\preceq G(x)\preceq MI
\qquad \forall x\in\Omega.
\]
Then:
\begin{enumerate}
\item if there exists \(\kappa>0\) such that
\[
\norm{\grad f(x)}\ge \kappa \bigl(f(x)-f_\star\bigr)^\theta
\qquad \forall x\in\Omega \text{ with } f(x)>f_\star,
\]
then \(f\) satisfies the metric relative error bound \eqref{eq:metric-reb} on \(\Omega\) with \(c=\kappa/\sqrt{M}\);
\item if \(f\) satisfies the metric relative error bound \eqref{eq:metric-reb} on \(\Omega\) with constant \(c\), then
\[
\norm{\grad f(x)}\ge \sqrt{m}\,c \bigl(f(x)-f_\star\bigr)^\theta
\qquad \forall x\in\Omega \text{ with } f(x)>f_\star.
\]
\end{enumerate}
In particular, the standard Polyak-Lojasiewicz inequality
\[
\norm{\grad f(x)}^2\ge 2\mu_{\mathrm{E}}\bigl(f(x)-f_\star\bigr)
\]
implies the metric DC-PL inequality \eqref{eq:metric-pl} with \(\mu=\mu_{\mathrm{E}}/M\), whereas \eqref{eq:metric-pl} implies the standard Polyak-Lojasiewicz inequality with \(\mu_{\mathrm{E}}=m\mu\).
\end{proposition}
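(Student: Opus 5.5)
The plan is to reduce everything to a two-sided comparison between the Euclidean norm of \(\grad f(x)\) and its inverse-Hessian norm \(\norm{\grad f(x)}_{G(x)^{-1}}\). No dynamical information is needed: the statement is pointwise on \(\Omega\), and positive invariance plays no role beyond fixing \(f_\star\).

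The first step is the spectral sandwich. Since \(mI\preceq G(x)\preceq MI\) with \(m>0\), inversion reverses the Loewner order on positive-definite matrices. Hence \(M^{-1}I\preceq G(x)^{-1}\preceq m^{-1}I\), which can also be read off from the eigenvalues of the symmetric matrix \(G(x)\). Evaluating the quadratic form at \(v=\grad f(x)\) gives
\[
\frac{1}{M}\norm{\grad f(x)}^2\le \norm{\grad f(x)}_{G(x)^{-1}}^2\le \frac{1}{m}\norm{\grad f(x)}^2
\qquad \forall x\in\Omega .
\]

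For item 1, take the square root of the left inequality to get \(\norm{\grad f(x)}_{G(x)^{-1}}\ge M^{-1/2}\norm{\grad f(x)}\). Then insert the Euclidean bound \(\norm{\grad f(x)}\ge\kappa(f(x)-f_\star)^\theta\), valid for \(x\in\Omega\) with \(f(x)>f_\star\). This yields \eqref{eq:metric-reb} with \(c=\kappa/\sqrt M\).

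For item 2, use the right inequality in the form \(\norm{\grad f(x)}\ge\sqrt m\,\norm{\grad f(x)}_{G(x)^{-1}}\). Then apply \eqref{eq:metric-reb} to get the claimed bound with constant \(\sqrt m\,c\).

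The Polyak–Lojasiewicz statements are the case \(\theta=\tfrac12\), and it is simplest to work with squared norms. From \(\norm{\grad f}^2\ge 2\mu_{\mathrm E}(f-f_\star)\) and the left sandwich we get \(\norm{\grad f}_{G^{-1}}^2\ge (2\mu_{\mathrm E}/M)(f-f_\star)\), so \(\mu=\mu_{\mathrm E}/M\). Conversely, \eqref{eq:metric-pl} together with the right sandwich gives \(\norm{\grad f}^2\ge m\norm{\grad f}_{G^{-1}}^2\ge 2m\mu(f-f_\star)\), so \(\mu_{\mathrm E}=m\mu\).

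One notational point needs care: \(\mu\) here denotes the DC-PL constant with \(2\mu=c^2\), not the strong-convexity modulus from \eqref{eq:standing}. With that in mind, the constants are consistent with items 1 and 2, since \(c^2=\kappa^2/M\) and \(\kappa^2=2\mu_{\mathrm E}\).

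There is no genuine obstacle in this argument. The only step that deserves a line of justification is the order-reversal of matrix inversion. I would justify it directly by diagonalizing \(G(x)=Q\Lambda Q^\top\): all eigenvalues lie in \([m,M]\), so those of \(G(x)^{-1}\) lie in \([1/M,1/m]\).
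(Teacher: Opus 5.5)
Your proposal is correct and follows essentially the same route as the paper: invert the spectral bounds to get $M^{-1}I\preceq G(x)^{-1}\preceq m^{-1}I$, compare $\norm{\grad f(x)}$ with $\norm{\grad f(x)}_{G(x)^{-1}}$ in both directions to obtain items 1 and 2, and specialize to $\theta=\tfrac12$ for the PL statements. Your remark that the DC-PL constant $\mu$ (with $2\mu=c^2$) is not the strong-convexity modulus from \eqref{eq:standing} is a useful clarification of a notational overload in the paper.
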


\begin{proof}
The bounds on \(G\) imply
\[
\frac{1}{M}I\preceq G(x)^{-1}\preceq \frac{1}{m}I
\qquad \forall x\in\Omega.
\]
Hence, for every \(v\in\R^n\),
\[
\frac{1}{M}\norm{v}^2\le \norm{v}_{G(x)^{-1}}^2\le \frac{1}{m}\norm{v}^2.
\]
The first claim follows from
\[
\norm{\grad f(x)}_{G(x)^{-1}}
\ge \frac{1}{\sqrt{M}}\norm{\grad f(x)}
\ge \frac{\kappa}{\sqrt{M}}\bigl(f(x)-f_\star\bigr)^\theta.
\]
The second claim follows from
\[
\norm{\grad f(x)}
\ge \sqrt{m}\,\norm{\grad f(x)}_{G(x)^{-1}}
\ge \sqrt{m}\,c\bigl(f(x)-f_\star\bigr)^\theta.
\]
Specializing to \(\theta=\frac12\) yields the PL comparison.
\end{proof}

\begin{theorem}
\label{thm:metric-rates}
Let \(\Omega\subset\R^n\) be positively invariant for \eqref{eq:continuous-dca}, and let \(x(\cdot)\) be a global solution with \(x(0)\in\Omega\). Assume that \(f\) is bounded below on \(\Omega\) and satisfies the metric relative error bound \eqref{eq:metric-reb} on \(\Omega\) for some \(\theta\in[1/2,1)\). Then, with
\[
V(t):=f(x(t))-f_\star,
\]
the following hold:
\begin{enumerate}
\item If \(\theta=\frac12\), then
\begin{equation}
\label{eq:metric-pl-rate}
V(t)\le e^{-c^2 t}V(0).
\end{equation}
\item If \(\theta\in(\frac12,1)\), then
\begin{equation}
\label{eq:metric-poly-rate}
V(t)\le \Bigl(V(0)^{1-2\theta}+c^2(2\theta-1)t\Bigr)^{-1/(2\theta-1)}.
\end{equation}
\end{enumerate}
If, in addition, the minimizer set
\[
\mathcal X_\star:=\{x\in\Omega:f(x)=f_\star\}
\]
is nonempty and \(f\) satisfies the quadratic-growth condition
\begin{equation}
\label{eq:qg-global}
f(x)-f_\star \ge \frac{\alpha}{2}\dist(x,\mathcal X_\star)^2,
\qquad x\in\Omega,
\end{equation}
then
\begin{equation}
\label{eq:distance-from-value}
\dist(x(t),\mathcal X_\star)\le \sqrt{\frac{2}{\alpha}}\,V(t)^{1/2}.
\end{equation}
In particular, under the metric DC-PL inequality \eqref{eq:metric-pl},
\begin{equation}
\label{eq:metric-pl-dist-rate}
\dist(x(t),\mathcal X_\star)\le \sqrt{\frac{2V(0)}{\alpha}}\,e^{-c^2 t/2}.
\end{equation}
\end{theorem}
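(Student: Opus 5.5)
The plan is a Lyapunov-type differential inequality for \(V(t)=f(x(t))-f_\star\), built from the energy identity of Proposition~\ref{prop:energy} and the metric relative error bound \eqref{eq:metric-reb}, and then integrated by a comparison argument. Since \(\Omega\) is positively invariant and \(x(0)\in\Omega\), we have \(x(t)\in\Omega\) for all \(t\ge 0\), so \(V(t)\ge 0\). Proposition~\ref{prop:energy} gives
\[
\dot V(t)=-\norm{\grad f(x(t))}_{G(x(t))^{-1}}^2 .
\]
Whenever \(V(t)>0\), \eqref{eq:metric-reb} then yields
\[
\dot V(t)\le -c^2 V(t)^{2\theta}.
\]
Because \(V\) is nonincreasing, the set \(\{t: V(t)=0\}\) is either empty or a terminal interval \([t_1,\infty)\). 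On that interval both claimed bounds hold trivially, since the right-hand sides are nonnegative. So it suffices to work on the interval where \(V>0\), and on \([0,t_1]\) by continuity.

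For \(\theta=\tfrac12\) the inequality reads \(\dot V\le -c^2 V\). I would differentiate \(e^{c^2 t}V(t)\), observe that its derivative is \(\le 0\), and conclude \eqref{eq:metric-pl-rate} by Grönwall.

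For \(\theta\in(\tfrac12,1)\) I would set \(W(t):=V(t)^{1-2\theta}\), which is well defined while \(V>0\). Then
\[
\dot W=(1-2\theta)V^{-2\theta}\dot V\ge (2\theta-1)c^2 ,
\]
where the inequality flips because \(1-2\theta<0\). Integrating gives
\[
W(t)\ge V(0)^{1-2\theta}+c^2(2\theta-1)t .
\]
Raising both sides to the negative power \(-1/(2\theta-1)\) reverses the inequality again and gives \eqref{eq:metric-poly-rate}. The case \(V(0)=0\) is trivial, with the right-hand side read as \(0\) under the convention \(0^{1-2\theta}=+\infty\).

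Under \eqref{eq:qg-global} with \(x(t)\in\Omega\), I would rearrange to \(\dist(x(t),\mathcal X_\star)^2\le \tfrac{2}{\alpha}V(t)\) and take square roots to obtain \eqref{eq:distance-from-value}. Substituting the exponential bound from the \(\theta=\tfrac12\) case then gives \eqref{eq:metric-pl-dist-rate}. Here \(c^2\) plays the role of \(2\mu\) from \eqref{eq:metric-pl}.

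The only delicate point is bookkeeping: the differential inequality is applied only where \(V>0\), and the possibility that \(V\) hits zero in finite time must be handled. Monotonicity of \(V\) disposes of this, as explained above. Everything else is a routine comparison argument, and no regularity beyond the \(C^1\) dependence of \(V\) on \(t\) (from \(C^2\) data) is needed.
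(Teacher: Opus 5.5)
Your proposal is correct and follows essentially the same route as the paper. The energy identity and the metric relative error bound give $\dot V\le -c^2V^{2\theta}$, which you integrate by Gronwall for $\theta=\frac12$ and through $V^{1-2\theta}$ for $\theta\in(\frac12,1)$, and the distance bound then follows from quadratic growth. Your explicit treatment of the case where $V$ reaches zero in finite time, via monotonicity and continuity, is a small tidying of a step the paper leaves implicit.
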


\begin{proof}
By Proposition~\ref{prop:energy},
\[
\dot V(t)=\frac{\dd}{\dd t}f(x(t))=-\norm{\grad f(x(t))}_{G(x(t))^{-1}}^2.
\]
Using \eqref{eq:metric-reb}, we obtain
\begin{equation}
\label{eq:rate-ode}
\dot V(t)\le -c^2V(t)^{2\theta}
\end{equation}
whenever \(V(t)>0\).

If \(\theta=\frac12\), then \eqref{eq:rate-ode} reduces to
\[
\dot V(t)\le -c^2V(t),
\]
and Gronwall's lemma yields \eqref{eq:metric-pl-rate}. If \(\theta\in(\frac12,1)\), then for every \(t\) such that \(V(t)>0\),
\[
\frac{\dd}{\dd t}V(t)^{1-2\theta}
=(1-2\theta)V(t)^{-2\theta}\dot V(t)
\ge c^2(2\theta-1).
\]
Integrating from \(0\) to \(t\) gives
\[
V(t)^{1-2\theta}\ge V(0)^{1-2\theta}+c^2(2\theta-1)t,
\]
which is equivalent to \eqref{eq:metric-poly-rate}.

If \eqref{eq:qg-global} holds, then
\[
\frac{\alpha}{2}\dist(x(t),\mathcal X_\star)^2\le f(x(t))-f_\star=V(t),
\]
which proves \eqref{eq:distance-from-value}. Combining \eqref{eq:distance-from-value} with \eqref{eq:metric-pl-rate} gives \eqref{eq:metric-pl-dist-rate}.
\end{proof}

Proposition~\ref{prop:metric-vs-euclidean-rates} shows that these conditions are not alien to the standard rate theory of gradient systems: on regions where \(G\) is uniformly equivalent to the Euclidean metric, they are precisely the familiar error-bound and Polyak-Lojasiewicz inequalities, with constants distorted by the extremal eigenvalues of \(G\). In particular, if \(f\) satisfies a Euclidean PL inequality with constant \(\mu_{\mathrm{E}}\) and \(G(x)\preceq MI\) on \(\Omega\), then
\[
f(x(t))-f_\star \le e^{-2\mu_{\mathrm{E}}t/M}\bigl(f(x(0))-f_\star\bigr),
\]
so excessive convexification of \(g\) degrades the global exponential rate by the factor \(M\). The case \(\theta=\frac12\) makes this especially clear. The metric DC-PL constant depends on the inverse Hessian metric induced by \(g\), and an unnecessarily large convexifier directly worsens the exponential decay constant through the factor \(M^{-1}\). Consequently, quantitative global rates, no less than local ones, are decomposition-dependent. This shows, now at the global level, how DC decomposition quality can be interpreted as metric quality.

The same metric DC-PL mechanism also yields a quantitative rate for the fixed-step damped scheme and, with it, a first principled criterion for choosing the relaxation parameter at the level of provable guarantees.

\begin{theorem}
\label{thm:damped-pl-rate}
Fix \(\eta\in(0,1)\), and let \(\{x_k\}\subset K\) be generated by \eqref{eq:relaxed-dca-primal}, where \(K\subset\R^n\) is convex and compact. Assume that \(\grad g\) is \(L_g\)-Lipschitz on \(K\), and that there exists \(\sigma>0\) such that
\begin{equation}
\label{eq:damped-metric-pl}
\norm{\grad f(x)}_{G(x)^{-1}}^2 \ge 2\sigma\bigl(f(x)-f_\star\bigr)
\qquad \forall x\in K,
\end{equation}
where \(f_\star:=\inf_{x\in K}f(x)\). Then, with
\[
q_\eta:=\max\Bigl\{0,\,1-\frac{\mu\sigma}{L_g}\eta(1-\eta)\Bigr\},
\]
one has
\begin{equation}
\label{eq:damped-linear-rate}
f(x_{k+1})-f_\star \le q_\eta\bigl(f(x_k)-f_\star\bigr)
\qquad \forall k\ge 0,
\end{equation}
and hence
\[
f(x_k)-f_\star \le q_\eta^k\bigl(f(x_0)-f_\star\bigr)
\qquad \forall k\ge 0.
\]
Among \(\eta\in(0,1)\), the contraction factor \(q_\eta\) furnished by this estimate is minimized at the half-relaxed choice \(\eta=\frac12\).
\end{theorem}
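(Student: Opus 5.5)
The plan is to combine the Bregman descent inequality of Proposition~\ref{prop:relaxed-descent} with a lower bound on \(D_g(x_{k+1},x_k)\) in terms of the dual displacement \(\grad g(x_{k+1})-\grad g(x_k)\). That displacement equals \(-\eta\grad f(x_k)\) by \eqref{eq:relaxed-dca-primal}. The metric DC-PL inequality \eqref{eq:damped-metric-pl} then turns the resulting gradient bound into a contraction of the optimality gap.

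\emph{Step 1 (descent in terms of the gradient).} By \eqref{eq:relaxed-descent},
\[
f(x_k)-f(x_{k+1})\ge \tfrac{1-\eta}{\eta}D_g(x_{k+1},x_k).
\]
I want the cocoercivity-type bound
\[
D_g(x_{k+1},x_k)\ge \tfrac{1}{2L_g}\norm{\grad g(x_{k+1})-\grad g(x_k)}^2=\tfrac{\eta^2}{2L_g}\norm{\grad f(x_k)}^2 .
\]
This would give
\[
f(x_k)-f(x_{k+1})\ge \tfrac{\eta(1-\eta)}{2L_g}\norm{\grad f(x_k)}^2 .
\]
Using only strong convexity, \(D_g\ge\frac{\mu}{2}\norm{\Delta}^2\) together with \(\eta\norm{\grad f(x_k)}\le L_g\norm{\Delta}\) (as in Proposition~\ref{prop:relaxed-criticality}), yields the weaker constant \(\mu\eta(1-\eta)/(2L_g^2)\). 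That constant loses a factor \(\mu/L_g\) relative to the claimed \(q_\eta\), so the sharper bound is genuinely needed.

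\emph{Step 2 (metric PL to Euclidean).} Since \(G(x)\succeq\mu I\), we have \(G(x)^{-1}\preceq\mu^{-1}I\). Hence \(\norm{\grad f(x_k)}^2\ge\mu\norm{\grad f(x_k)}_{G(x_k)^{-1}}^2\ge 2\mu\sigma\,(f(x_k)-f_\star)\). Inserting this into Step 1 gives
\[
f(x_k)-f(x_{k+1})\ge \tfrac{\mu\sigma}{L_g}\eta(1-\eta)\,(f(x_k)-f_\star).
\]
Rearranging yields \(f(x_{k+1})-f_\star\le(1-\frac{\mu\sigma}{L_g}\eta(1-\eta))(f(x_k)-f_\star)\). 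Because \(x_{k+1}\in K\), the left side is nonnegative, so the factor may be replaced by \(q_\eta\) with the \(\max\{0,\cdot\}\) truncation. Induction on \(k\) gives the geometric bound. Finally, \(\eta\mapsto\eta(1-\eta)\) is maximized on \((0,1)\) exactly at \(\eta=\tfrac12\), so \(q_\eta\) is minimized there.

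\emph{Main obstacle.} The hard part is justifying the inequality \(D_g(y,x)\ge\frac{1}{2L_g}\norm{\grad g(y)-\grad g(x)}^2\) when \(\grad g\) is only \(L_g\)-Lipschitz on the convex compact set \(K\) rather than globally. My first attempt is to use Fenchel duality, \(D_g(y,x)=D_{g^*}(\grad g(x),\grad g(y))\), together with the integral form of \(D_{g^*}\) along the dual segment. Here \(\Hess g^*(p)=G(\grad g^*(p))^{-1}\), and \(G\preceq L_g I\) on \(K\) (which follows from convexity of \(K\) and Lipschitzness of \(\grad g\)). This gives \(\Hess g^*\succeq L_g^{-1}I\) at dual points whose preimage lies in \(K\). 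The difficulty is that the primal preimage of the dual segment between \(\grad g(x_k)\) and \(\grad g(x_{k+1})\) need not stay in \(K\). If that cannot be controlled, I would fall back on the standard device of replacing \(g\) outside \(K\) by a globally \(L_g\)-smooth convex extension that agrees with \(g\) on \(K\). Failing that, I would state the result under the assumption that \(\grad g\) is globally \(L_g\)-Lipschitz, or that \(K\) contains the relevant segments.
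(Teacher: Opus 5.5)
Your chain of inequalities is exactly the paper's: the descent inequality \eqref{eq:relaxed-descent}, the identity $\grad g(x_{k+1})-\grad g(x_k)=-\eta\grad f(x_k)$, a cocoercivity lower bound on $D_g(x_{k+1},x_k)$, the passage $\norm{\grad f}^2\ge\mu\norm{\grad f}_{G^{-1}}^2$, the metric DC-PL inequality, truncation at $0$, and maximization of $\eta(1-\eta)$. The paper settles the one step you leave open with a single appeal to the Baillon--Haddad inequality ``on the convex set $K$''. Your hesitation there is justified, and this step is a genuine gap in both your proposal and the paper's argument. Baillon--Haddad is a global statement: its proof uses the descent lemma at $y-\frac{1}{L_g}(\grad g(y)-\grad g(x))$, which can leave $K$. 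For $n\ge2$, the inequality $D_g(y,x)\ge\frac{1}{2L_g}\norm{\grad g(y)-\grad g(x)}^2$ can fail for $x,y\in K$ when $\grad g$ is $L_g$-Lipschitz only on $K$. For instance, take $L_g=1$, $x=0$, $y=e_1\in\R^2$. Let $\Hess g(te_1)$ be the orthogonal projection onto the span of $(\sqrt{0.4},\sqrt{0.6})$ for $t<\frac12$, and onto the span of $(\sqrt{0.6},\sqrt{0.4})$ for $t>\frac12$. Then $D_g(y,x)=\int_0^1(1-t)\,e_1^\top\Hess g(te_1)e_1\dd t=0.225$. On the other hand $\grad g(y)-\grad g(x)=(0.5,\sqrt{0.24})$, so $\frac12\norm{\grad g(y)-\grad g(x)}^2=0.245$. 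Smoothing the jump and replacing each projection $P$ by $0.98P+0.01I$ keeps the strict inequality. The resulting profile is the Hessian along $[x,y]$ of a $C^2$ function with $0\prec\Hess g\prec I$ on a thin rectangle $K$ around $[x,y]$. This function extends (e.g.\ by a smooth-maximum construction) to a strongly convex $C^2$ function on $\R^2$.

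Consequently, your extension fallback cannot work in general. Suppose a convex function with globally $L_g$-Lipschitz gradient agreed with $g$ to first order on $K$. The global Baillon--Haddad theorem would then force on $K$ exactly the inequality that fails, so such an extension exists only when the inequality already holds. Your last fallback is the correct repair. Under a global $L_g$-Lipschitz assumption on $\grad g$, Baillon--Haddad applies directly. Alternatively, use your dual computation $D_g(x_{k+1},x_k)=D_{g^*}(\grad g(x_k),\grad g(x_{k+1}))$ with $\Hess g^*=G(\grad g^*)^{-1}$. It goes through verbatim once $G\preceq L_gI$ along the curve $s\mapsto\grad g^*\bigl((1-s)\grad g(x_k)+s\grad g(x_{k+1})\bigr)$, $s\in[0,1]$. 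By \eqref{eq:relaxed-dca-primal}, this curve consists of the damped steps from $x_k$ with parameters $s\eta\in[0,\eta]$. Under the hypotheses exactly as stated, the chain rigorously delivers only the strong-convexity variant you already wrote down. Its contraction factor is $\max\{0,\,1-\frac{\mu^2\sigma}{L_g^2}\eta(1-\eta)\}$, which is still minimized at $\eta=\frac12$.
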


\begin{proof}
Since \(g\) is convex and \(\grad g\) is \(L_g\)-Lipschitz on the convex set \(K\), the Baillon-Haddad cocoercivity inequality yields
\[
D_g(x_{k+1},x_k)
\ge \frac{1}{2L_g}\norm{\grad g(x_{k+1})-\grad g(x_k)}^2.
\]
Using \eqref{eq:relaxed-dca-primal}, we obtain
\[
\grad g(x_{k+1})-\grad g(x_k)
=\eta\bigl(\grad h(x_k)-\grad g(x_k)\bigr)
=-\eta\grad f(x_k),
\]
and therefore
\[
D_g(x_{k+1},x_k)\ge \frac{\eta^2}{2L_g}\norm{\grad f(x_k)}^2.
\]
Because \(g\) is \(\mu\)-strongly convex, \(G(x_k)\succeq \mu I\), hence
\[
\norm{\grad f(x_k)}^2 \ge \mu \norm{\grad f(x_k)}_{G(x_k)^{-1}}^2.
\]
Combining this with \eqref{eq:damped-metric-pl} gives
\[
D_g(x_{k+1},x_k)\ge \frac{\mu\sigma}{L_g}\eta^2\bigl(f(x_k)-f_\star\bigr).
\]
Now \eqref{eq:relaxed-descent} implies
\begin{align*}
f(x_{k+1})-f_\star
&\le f(x_k)-f_\star-\frac{1-\eta}{\eta}D_g(x_{k+1},x_k) \\
&\le \Bigl(1-\frac{\mu\sigma}{L_g}\eta(1-\eta)\Bigr)\bigl(f(x_k)-f_\star\bigr),
\end{align*}
\[
\tilde q_\eta:=1-\frac{\mu\sigma}{L_g}\eta(1-\eta).
\]
If \(\tilde q_\eta\ge 0\), then the previous estimate is exactly \eqref{eq:damped-linear-rate}, since in that case \(q_\eta=\tilde q_\eta\). If \(\tilde q_\eta<0\), then the same estimate, together with \(f(x_{k+1})-f_\star\ge 0\) and \(f(x_k)-f_\star\ge 0\), forces
\[
f(x_k)-f_\star = f(x_{k+1})-f_\star = 0.
\]
Hence \eqref{eq:damped-linear-rate} still holds, now with \(q_\eta=0\). Therefore the one-step estimate is valid in all cases. Iterating it yields the linear bound. Finally, the map \(s\mapsto \max\{0,1-\frac{\mu\sigma}{L_g}s\}\) is nonincreasing for \(s\ge 0\), while \(\eta(1-\eta)\) is maximized at \(\eta=\frac12\), so \(q_\eta\) is minimized at the half-relaxed choice \(\eta=\frac12\).
\end{proof}

\subsection{Local exponential stability near nondegenerate local minima}

When a global metric relative error bound is unavailable, the metric viewpoint still yields a clear local rate statement near nondegenerate minima. Recall that an equilibrium of \eqref{eq:continuous-dca} is simply a point \(x^\star\) for which the constant trajectory \(x(t)\equiv x^\star\) solves the flow, equivalently \(\grad f(x^\star)=0\). Saying that such an equilibrium is \emph{locally exponentially stable} means that every trajectory starting sufficiently near \(x^\star\) remains near it and converges back to \(x^\star\) at an exponential rate. This is the continuous-time analogue of a local linear convergence statement for an algorithm and shows that, near a nondegenerate local minimum, the DCA flow is not only asymptotically convergent but quantitatively contracting.

\begin{theorem}
\label{thm:local-exp}
Let \(x^\star\) be a critical point of \(f\) such that
\begin{equation}
\label{eq:strict-local-min}
\Hess f(x^\star)\succ 0.
\end{equation}
Then \(x^\star\) is a locally exponentially stable equilibrium of \eqref{eq:continuous-dca}. More precisely, there exist neighborhoods \(U\subset V\) of \(x^\star\) and constants \(c_1,c_2,\lambda>0\) such that every solution with \(x(0)\in U\) remains in \(V\) for all \(t\ge 0\) and satisfies
\begin{equation}
\label{eq:exp-estimate}
\norm{x(t)-x^\star}
\le c_1 e^{-\lambda t}\norm{x(0)-x^\star},
\qquad
f(x(t))-f(x^\star)\le c_2 e^{-2\lambda t}.
\end{equation}
In fact, if \(m_f,L_f,M>0\) and a neighborhood \(V\) of \(x^\star\) are chosen so that
\[
m_f I \preceq \Hess f(x)\preceq L_f I,
\qquad
\Hess g(x)\preceq MI
\]
for all \(x\in V\), then for every sufficiently small neighborhood \(U\) of \(x^\star\) with \(\overline U\subset V\), one may take
\[
\lambda=\frac{m_f}{M},
\qquad
c_1=\sqrt{\frac{L_f}{m_f}},
\qquad
c_2=\sup_{x\in U}\bigl(f(x)-f(x^\star)\bigr).
\]
\end{theorem}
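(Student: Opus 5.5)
The plan is a Lyapunov argument with \(V(t):=f(x(t))-f(x^\star)\), combining local strong convexity of \(f\) with the energy identity of Proposition~\ref{prop:energy}, after a trapping argument that keeps trajectories in \(V\).

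\emph{Choice of neighborhoods.} By continuity of \(\Hess f\) and \(\Hess g\) and by \eqref{eq:strict-local-min}, I will take \(V\) to be a closed ball \(\overline{B}_r(x^\star)\) on which \(m_f I\preceq \Hess f\preceq L_f I\) and \(\Hess g\preceq MI\). Convexity of \(V\) is essential. Taylor's formula along segments from \(x^\star\), using \(\grad f(x^\star)=0\), then gives for all \(x\in V\):
\[
\frac{m_f}{2}\norm{x-x^\star}^2\le f(x)-f(x^\star)\le \frac{L_f}{2}\norm{x-x^\star}^2 .
\]
Strong convexity of \(f\) on the convex set \(V\), with minimizer \(x^\star\) there, also yields the Euclidean PL bound \(\norm{\grad f(x)}^2\ge 2m_f\bigl(f(x)-f(x^\star)\bigr)\) on \(V\). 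To see this, minimize the lower quadratic model \(f(x)+\ip{\grad f(x)}{z-x}+\frac{m_f}{2}\norm{z-x}^2\) over \(z\in\R^n\), and use that its value at \(z=x^\star\) is at most \(f(x^\star)\).

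\emph{Trapping.} This is the step needing the most care. Set \(\delta:=\frac{m_f}{2}r^2\) and
\[
U:=\{x\in B_r(x^\star): f(x)<f(x^\star)+\delta\},
\]
an open neighborhood of \(x^\star\); any smaller neighborhood also works. Let a solution start in \(U\). By Proposition~\ref{prop:energy}, \(f(x(t))\) is nonincreasing while the solution exists, so \(f(x(t))<f(x^\star)+\delta\). On the sphere \(\norm{x-x^\star}=r\), however, the quadratic lower bound gives \(f\ge f(x^\star)+\delta\). By continuity, the trajectory therefore cannot reach the sphere and stays in \(B_r\subset V\). Because the solution remains in a compact set, the locally Lipschitz vector field \(-G(x)^{-1}\grad f(x)\) admits no finite-time blow-up, so the solution is global.

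\emph{Rates.} On \(V\) we have \(G^{-1}\succeq M^{-1}I\), so Proposition~\ref{prop:energy} gives
\[
\dot V=-\norm{\grad f}_{G^{-1}}^2\le -\frac{1}{M}\norm{\grad f}^2\le -\frac{2m_f}{M}V .
\]
Gronwall's inequality then yields \(V(t)\le e^{-2\lambda t}V(0)\) with \(\lambda=m_f/M\). Since \(V(0)\le \sup_{x\in U}(f(x)-f(x^\star))=c_2\), this is the second estimate in \eqref{eq:exp-estimate}. Sandwiching with the two quadratic bounds gives
\[
\frac{m_f}{2}\norm{x(t)-x^\star}^2\le V(t)\le e^{-2\lambda t}\frac{L_f}{2}\norm{x(0)-x^\star}^2 ,
\]
which is the first estimate with \(c_1=\sqrt{L_f/m_f}\).

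The main obstacle is the trapping step. It is the only point where locality enters, and it relies on choosing \(V\) convex, so that strong convexity and PL hold throughout \(V\), and \(U\) as a sublevel set, so that monotonicity of \(f\) prevents escape. Everything after that is a direct application of the energy identity.
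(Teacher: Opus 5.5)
Your proof is correct and follows essentially the same route as the paper: Taylor quadratic bounds, a local PL inequality, sublevel-set trapping at level $\delta=\frac{m_f}{2}r^2$, and then Gronwall plus the quadratic sandwich. Beyond that, you derive the PL bound from strong convexity on a convex $V$ and make the no-escape and global-existence steps explicit, where the paper leaves them implicit. One minor slip: under the standing assumption $g\in C^2$ the primal field $-G(x)^{-1}\grad f(x)$ is only continuous, not locally Lipschitz, but your no-blow-up conclusion still holds by the continuation theorem for continuous fields, or via the $C^1$ dual field $T(y)-y$, which also gives uniqueness.
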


\begin{proof}
Condition \eqref{eq:strict-local-min} and continuity of \(\Hess f\) imply that, after shrinking to a sufficiently small neighborhood \(V\) of \(x^\star\), there exist constants \(m_f,L_f>0\) such that
\[
m_f I \preceq \Hess f(x)\preceq L_f I,
\qquad x\in V.
\]
Since \(\grad f(x^\star)=0\), Taylor's theorem yields the quadratic bounds
\begin{equation}
\label{eq:local-strong-convex}
f(x)-f(x^\star)\ge \frac{m_f}{2}\norm{x-x^\star}^2
\end{equation}
and
\begin{equation}
\label{eq:local-upper-quad}
f(x)-f(x^\star)\le \frac{L_f}{2}\norm{x-x^\star}^2
\end{equation}
for all \(x\in V\). Shrinking \(V\) if necessary, we may also assume the local Polyak-Lojasiewicz estimate
\begin{equation}
\label{eq:local-pl}
\norm{\grad f(x)}^2 \ge 2m_f\bigl(f(x)-f(x^\star)\bigr),
\qquad x\in V,
\end{equation}
and that \(G(x)\preceq MI\) on \(V\) for some \(M>0\).

Choose \(r>0\) so that the closed ball \(\overline{B_r(x^\star)}\) is contained in \(V\), and let
\[
\delta:=\frac{m_f}{2}r^2.
\]
By \eqref{eq:local-strong-convex}, the sublevel set
\[
\{x\in V: f(x)-f(x^\star)\le \delta\}
\]
is contained in \(B_r(x^\star)\subset V\). Now define
\[
\rho:=\sqrt{\frac{m_f}{L_f}}\,r,
\qquad
U:=B_\rho(x^\star).
\]
Then \eqref{eq:local-upper-quad} gives
\[
f(x)-f(x^\star)\le \frac{L_f}{2}\rho^2=\delta
\qquad \forall x\in U,
\]
so \(U\) is contained in the above sublevel set. Therefore, if \(x(0)\in U\), Proposition~\ref{prop:energy} implies that \(f(x(t))\le f(x(0))\le f(x^\star)+\delta\) for all \(t\ge 0\), and hence the whole trajectory remains in \(V\).

For such a trajectory, Proposition~\ref{prop:energy} and \eqref{eq:local-pl} give
\begin{align*}
\frac{\dd}{\dd t}\bigl(f(x(t))-f(x^\star)\bigr)
&= -\grad f(x(t))^\top G(x(t))^{-1}\grad f(x(t)) \\
&\le -\frac{1}{M}\norm{\grad f(x(t))}^2 \\
&\le -\frac{2m_f}{M}\bigl(f(x(t))-f(x^\star)\bigr).
\end{align*}
Gronwall's lemma yields
\[
f(x(t))-f(x^\star)
\le e^{-2m_ft/M}\bigl(f(x(0))-f(x^\star)\bigr).
\]
Using \eqref{eq:local-strong-convex} and \eqref{eq:local-upper-quad}, we obtain
\[
\frac{m_f}{2}\norm{x(t)-x^\star}^2
\le f(x(t))-f(x^\star)
\le e^{-2m_ft/M}\frac{L_f}{2}\norm{x(0)-x^\star}^2,
\]
hence
\[
\norm{x(t)-x^\star}\le \sqrt{\frac{L_f}{m_f}}\,e^{-m_ft/M}\norm{x(0)-x^\star}.
\]
This proves the first estimate in \eqref{eq:exp-estimate} with \(c_1=\sqrt{L_f/m_f}\) and \(\lambda=m_f/M\). The second estimate follows from the already established decay of \(f(x(t))-f(x^\star)\), and one may take \(c_2=\sup_{x\in U}(f(x)-f(x^\star))\).
\end{proof}

The rate constant in Theorem~\ref{thm:local-exp} is controlled by the relative curvature pair \((\Hess f(x^\star),\Hess g(x^\star))\): a better-conditioned metric \(G(x^\star)\) yields faster local dissipation. This observation can be sharpened into a concrete design principle for DC decompositions.

\begin{proposition}
\label{prop:metric-quality}
Let \(x^\star\) be a critical point with \(\Hess f(x^\star)\succ 0\), and write
\[
H_f:=\Hess f(x^\star),
\qquad
G_\star:=\Hess g(x^\star).
\]
The linearization of the continuous DCA flow \eqref{eq:continuous-dca} at \(x^\star\) is
\begin{equation}
\label{eq:linearized-flow}
\dot z = -G_\star^{-1}H_f z.
\end{equation}
Consequently, the local exponential rate of the linearized flow is governed by the spectrum of \(G_\star^{-1}H_f\). In particular,
\[
\lambda_{\mathrm{lin}}=\lambda_{\min}(G_\star^{-1}H_f)
\]
is the worst-case asymptotic rate. If, moreover, \(G_\star=\alpha H_f\) for some \(\alpha>0\), then all linearized modes contract at the common rate \(\alpha^{-1}\).
\end{proposition}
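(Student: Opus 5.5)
The plan is to linearize the vector field $F(x):=-G(x)^{-1}\grad f(x)$ at $x^\star$ and then read off the rates from the spectrum of the resulting matrix. Since $g,h\in C^2$, the map $F$ is $C^1$ wherever $G$ is $C^1$. If we only have $g\in C^2$, $G$ is merely continuous, and I handle this by a difference-quotient argument that uses $\grad f(x^\star)=0$. Write
\[
F(x^\star+z)=-G(x^\star+z)^{-1}\bigl(\grad f(x^\star+z)-\grad f(x^\star)\bigr)=-G(x^\star+z)^{-1}\bigl(H_f z+o(\norm{z})\bigr).
\]
By continuity of $G$ and uniform invertibility ($G\succeq\mu I$), we have $G(x^\star+z)^{-1}=G_\star^{-1}+o(1)$. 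Hence $F(x^\star+z)=-G_\star^{-1}H_f z+o(\norm{z})$. This shows that $F$ is differentiable at $x^\star$ with $DF(x^\star)=-G_\star^{-1}H_f$, which gives \eqref{eq:linearized-flow}.

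Next, I analyze the spectrum. The matrix $G_\star^{-1}H_f$ is not symmetric, but it is similar to the symmetric positive definite matrix $G_\star^{-1/2}H_fG_\star^{-1/2}$. Therefore its eigenvalues are real and positive, and it is diagonalizable, with eigenvectors that are $G_\star$-orthogonal; equivalently, they are the generalized eigenvalues of the pencil $(H_f,G_\star)$. Changing variables to $w=G_\star^{1/2}z$ turns \eqref{eq:linearized-flow} into $\dot w=-Sw$ with $S$ symmetric. This decouples into modes $w_i(t)=e^{-\lambda_i t}w_i(0)$. Consequently $\norm{z(t)}_{G_\star}\le e^{-\lambda_{\min}t}\norm{z(0)}_{G_\star}$, with equality along the corresponding eigenvector. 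So $\lambda_{\mathrm{lin}}=\lambda_{\min}(G_\star^{-1}H_f)$ is the worst-case asymptotic rate, and it is attained. Via Rayleigh quotients one may also record $\lambda_{\min}=\min_{v\ne 0} v^\top H_f v/v^\top G_\star v$, which is consistent with the bound $m_f/M$ from Theorem~\ref{thm:local-exp}.

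Finally, if $G_\star=\alpha H_f$, then $G_\star^{-1}H_f=\alpha^{-1}I$. The linearized flow is then $\dot z=-\alpha^{-1}z$, so every mode contracts at the common rate $\alpha^{-1}$.

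The only delicate point is the first step: justifying differentiability of $F$ at $x^\star$ when $G$ is merely continuous. The factorization through $\grad f(x^\star)=0$ resolves it without requiring $C^3$ data. Everything else is linear algebra.
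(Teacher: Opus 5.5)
Your proof is correct and has the same skeleton as the paper's: linearize $F=-G^{-1}\grad f$ at $x^\star$, use $\grad f(x^\star)=0$ to remove the contribution of the varying metric, and read off the rates from the similarity of $G_\star^{-1}H_f$ to $G_\star^{-1/2}H_fG_\star^{-1/2}$.

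The one real difference is how you obtain $\mathrm{D}F(x^\star)$. The paper applies the product rule and then drops the term $\mathrm{D}\bigl(\Hess g(x)^{-1}\bigr)\grad f(x)$ at $x^\star$. This tacitly requires $\Hess g$ to be differentiable, which goes beyond the standing hypothesis \eqref{eq:standing}. Your factorization $F(x^\star+z)=-G(x^\star+z)^{-1}\bigl(H_fz+o(\norm{z})\bigr)$, combined with $G(x^\star+z)^{-1}=G_\star^{-1}+o(1)$, gives Fr\'echet differentiability at $x^\star$ from $C^2$ data alone. It is therefore the more economical justification of the same formula.

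Your change of variables $w=G_\star^{1/2}z$ also makes the ``worst-case asymptotic rate'' claim precise, via $\norm{z(t)}_{G_\star}\le e^{-\lambda_{\min}t}\norm{z(0)}_{G_\star}$ with equality on the bottom eigenvector. The paper only records that the spectrum is positive and the linear system is exponentially stable. Your Rayleigh-quotient remark $\lambda_{\min}\ge m_f/M$ is a useful consistency check with Theorem~\ref{thm:local-exp}.
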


\begin{proof}
Let
\[
F(x):=-\Hess g(x)^{-1}\grad f(x),
\]
so that \eqref{eq:continuous-dca} is \(\dot x=F(x)\). We differentiate \(F\) at the equilibrium \(x^\star\). By the product rule,
\[
\mathrm{D}F(x)
=-\mathrm{D}\bigl(\Hess g(x)^{-1}\bigr)\,\grad f(x)
-\Hess g(x)^{-1}\Hess f(x).
\]
Since \(\grad f(x^\star)=0\), the first term vanishes at \(x^\star\), and therefore
\[
\mathrm{D}F(x^\star)
=-\Hess g(x^\star)^{-1}\Hess f(x^\star)
=-G_\star^{-1}H_f,
\]
which proves that the linearization is exactly \eqref{eq:linearized-flow}.

Since \(G_\star\succ0\) and \(H_f\succ0\), the matrix \(G_\star^{-1}H_f\) is similar to the symmetric positive-definite matrix
\[
G_\star^{-1/2}H_fG_\star^{-1/2},
\]
so its eigenvalues are real and strictly positive. Hence the linear system \eqref{eq:linearized-flow} is exponentially stable. This proves the rate claim.

If \(G_\star=\alpha H_f\) for some \(\alpha>0\), then \(G_\star^{-1}H_f=\alpha^{-1}I\), so every mode decays at the same rate \(\alpha^{-1}\).
\end{proof}

\begin{proposition}
\label{prop:damped-local-rate}
Let \(x^\star\) be a critical point with \(H_f:=\Hess f(x^\star)\succ0\), and for \(\eta\in(0,1]\) define the damped DCA map
\[
\Phi_\eta(x):=(\grad g)^{-1}\bigl((1-\eta)\grad g(x)+\eta\grad h(x)\bigr).
\]
Then the linearization of \(\Phi_\eta\) at \(x^\star\) is
\begin{equation}
\label{eq:damped-linearization}
z_{k+1}=\bigl(I-\eta G_\star^{-1}H_f\bigr)z_k.
\end{equation}
Moreover, every eigenvalue of \(G_\star^{-1}H_f\) lies in \((0,1]\), and therefore the local linearized contraction factor is
\[
q_{\mathrm{loc}}(\eta)
:=\rho\bigl(I-\eta G_\star^{-1}H_f\bigr)
=1-\eta\lambda_{\min}(G_\star^{-1}H_f).
\]
In particular, over \(\eta\in(0,1]\), the local linearized rate is optimized by the full-step choice \(\eta=1\).
\end{proposition}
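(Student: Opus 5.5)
The plan is a direct computation in three steps: differentiate \(\Phi_\eta\) at its fixed point \(x^\star\), pin down the spectrum of \(G_\star^{-1}H_f\) using convexity of \(h\), and then read off the spectral radius.

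\emph{Linearization.} Since \(\grad f(x^\star)=0\), we have \(\grad g(x^\star)=\grad h(x^\star)\). Hence the argument of \((\grad g)^{-1}\) at \(x=x^\star\) equals \(\grad g(x^\star)\), and \(\Phi_\eta(x^\star)=x^\star\). By the inverse function theorem, \((\grad g)^{-1}\) is \(C^1\) with \(\mathrm{D}(\grad g)^{-1}(y)=\Hess g\bigl((\grad g)^{-1}(y)\bigr)^{-1}\). Writing \(H_h:=\Hess h(x^\star)\), the chain rule at \(x^\star\) gives
\[
\mathrm{D}\Phi_\eta(x^\star)=G_\star^{-1}\bigl((1-\eta)G_\star+\eta H_h\bigr)=I-\eta G_\star^{-1}(G_\star-H_h)=I-\eta G_\star^{-1}H_f .
\]
The last equality uses \(H_f=G_\star-H_h\). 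This is exactly \eqref{eq:damped-linearization}.

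\emph{Spectrum of \(G_\star^{-1}H_f\).} As in the proof of Proposition~\ref{prop:metric-quality}, \(G_\star^{-1}H_f\) is similar to the symmetric matrix
\[
S:=G_\star^{-1/2}H_fG_\star^{-1/2}=I-G_\star^{-1/2}H_hG_\star^{-1/2}.
\]
From \(H_f\succ0\) we get \(S\succ0\). Convexity of \(h\) gives \(H_h\succeq0\), hence \(S\preceq I\). So every eigenvalue \(\lambda\) of \(G_\star^{-1}H_f\) is real and lies in \((0,1]\). This step is the only place the DC structure (convexity of \(h\)) enters, and it is the key observation.

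\emph{Contraction factor and optimal \(\eta\).} The matrix \(I-\eta G_\star^{-1}H_f\) is similar to the symmetric matrix \(I-\eta S\). Its eigenvalues are \(1-\eta\lambda\), and since \(0<\eta\lambda\le1\) for \(\eta\in(0,1]\), each lies in \([0,1)\). The spectral radius is therefore the largest of them, \(1-\eta\lambda_{\min}(G_\star^{-1}H_f)\), with no absolute values to track. Because \(\lambda_{\min}>0\), this expression is strictly decreasing in \(\eta\), so it is minimized over \((0,1]\) at \(\eta=1\).

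The main point needing care is that \(G_\star^{-1}H_f\) is not symmetric, so its spectral radius is not immediately a norm. Passing to the similar symmetric matrix \(S\) handles this. It also shows that \(\rho\) equals the induced \(G_\star\)-norm of \(I-\eta G_\star^{-1}H_f\), so it genuinely governs local linear contraction.
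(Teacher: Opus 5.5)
Your proposal is correct and follows the paper's proof essentially step for step. It uses the same chain-rule computation $\mathrm{D}\Phi_\eta(x^\star)=G_\star^{-1}\bigl((1-\eta)G_\star+\eta H_h\bigr)=I-\eta G_\star^{-1}H_f$, the same use of $H_h\succeq0$ to get eigenvalues in $(0,1]$ via the similar matrix $G_\star^{-1/2}H_fG_\star^{-1/2}$, and the same observation that $|1-\eta\lambda|=1-\eta\lambda$. Two small points that the paper leaves implicit are welcome additions: your explicit check that $\Phi_\eta(x^\star)=x^\star$, which is needed to evaluate the derivative of $(\grad g)^{-1}$ at $\grad g(x^\star)$, and your remark that $\rho$ equals the $G_\star$-induced operator norm.
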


\begin{proof}
Let \(H_h:=\Hess h(x^\star)\), so that \(H_f=G_\star-H_h\). Since \(h\) is convex, \(H_h\succeq0\). Differentiating \(\Phi_\eta\) at \(x^\star\) gives
\[
\mathrm{D}\Phi_\eta(x^\star)
=G_\star^{-1}\bigl((1-\eta)G_\star+\eta H_h\bigr)
=I-\eta G_\star^{-1}(G_\star-H_h)
=I-\eta G_\star^{-1}H_f,
\]
which proves \eqref{eq:damped-linearization}. Because \(H_h\succeq0\), one has
\[
0\prec H_f=G_\star-H_h\preceq G_\star.
\]
Hence \(G_\star^{-1}H_f\) is similar to the symmetric positive-definite matrix \(G_\star^{-1/2}H_fG_\star^{-1/2}\), whose eigenvalues all lie in \((0,1]\). Therefore, for every eigenvalue \(\lambda\in(0,1]\) and every \(\eta\in(0,1]\),
\[
|1-\eta\lambda|=1-\eta\lambda.
\]
Taking the maximum over all eigenvalues yields
\[
\rho\bigl(I-\eta G_\star^{-1}H_f\bigr)
=\max_{\lambda\in\sigma(G_\star^{-1}H_f)}(1-\eta\lambda)
=1-\eta\lambda_{\min}(G_\star^{-1}H_f),
\]
which is strictly decreasing in \(\eta\). The optimal local linearized choice is therefore \(\eta=1\).
\end{proof}

Theorem~\ref{thm:damped-pl-rate} and Proposition~\ref{prop:damped-local-rate} show that there is no universal optimal relaxation parameter. The provable global rate guarantee derived from descent and a metric DC-PL inequality is strongest at \(\eta=\frac12\), whereas the local linearized rate near a nondegenerate local minimum is fastest at \(\eta=1\). In this sense, smaller relaxation improves regularization and global control, while larger relaxation favors aggressive local convergence.

\paragraph{Design principle.}
Proposition~\ref{prop:metric-quality} shows that DC decomposition quality is inseparable from metric quality. A convex component \(g\) does more than make \(h=g-f\) convex; it also supplies the Hessian metric that governs the local dynamics. Thus, near a target local minimum, it is reasonable to view \(\Hess g(x^\star)\) as a \emph{preconditioner} for \(\Hess f(x^\star)\). When \(G_\star\) is close to being proportional to \(H_f\), or more generally is well aligned with its eigenspaces without being unnecessarily large, the linearized dynamics is more isotropic and the local contraction is better conditioned. This gives a concrete guideline for decomposition design.

\section{Decomposition Sensitivity and Metric Design}
\label{sec:geometry}

The flow \eqref{eq:continuous-dca} depends on \(f\) only through the pair \((\grad f,G)\). Therefore two different decompositions of the same objective yield different vector fields whenever they induce different Hessian metrics. This section makes that point explicit.

\begin{proposition}
\label{prop:metric-comparison}
Let \(K\subset\R^n\) be compact, and assume \(x(t)\in K\) for all \(t\) in the interval of interest. If
\[
mI \preceq G(x) \preceq MI
\qquad \forall x\in K,
\]
then along the continuous DCA flow,
\begin{equation}
\label{eq:dissipation-bounds}
-\frac{1}{m}\norm{\grad f(x(t))}^2
\le \frac{\dd}{\dd t}f(x(t))
\le -\frac{1}{M}\norm{\grad f(x(t))}^2.
\end{equation}
\end{proposition}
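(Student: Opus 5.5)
The plan is to combine the energy identity of Proposition~\ref{prop:energy} with a two-sided spectral bound on the inverse Hessian. By \eqref{eq:energy-identity}, along any solution of \eqref{eq:continuous-dca} one has
\[
\frac{\dd}{\dd t}f(x(t)) = -\grad f(x(t))^\top G(x(t))^{-1}\grad f(x(t)) = -\norm{\grad f(x(t))}_{G(x(t))^{-1}}^2 .
\]
So \eqref{eq:dissipation-bounds} reduces to a pointwise comparison between the quadratic forms of \(G(x)^{-1}\) and the identity, evaluated at \(x=x(t)\in K\).

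The first step is to invert the Loewner bounds. For symmetric positive-definite matrices, \(mI\preceq G(x)\preceq MI\) implies
\[
\frac{1}{M}I\preceq G(x)^{-1}\preceq \frac{1}{m}I .
\]
This follows by diagonalizing \(G(x)\) in an orthonormal eigenbasis. Its eigenvalues lie in \([m,M]\), so those of \(G(x)^{-1}\) lie in \([1/M,1/m]\). The same step already appears in the proof of Proposition~\ref{prop:metric-vs-euclidean-rates}. Applying it with \(v=\grad f(x(t))\) gives
\[
\frac{1}{M}\norm{\grad f(x(t))}^2\le \norm{\grad f(x(t))}_{G(x(t))^{-1}}^2\le \frac{1}{m}\norm{\grad f(x(t))}^2 .
\]
Multiplying by \(-1\) reverses the inequalities, and substituting into the energy identity yields both sides of \eqref{eq:dissipation-bounds}.

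There is no substantial obstacle. The only points to check are that the hypothesis \(x(t)\in K\) is what allows the spectral bounds to be used at every time in the interval, and that \(m>0\) is needed for the inversion. Positivity of \(m\) holds automatically under \eqref{eq:standing}, since one may take \(m\ge\mu\). The existence of a finite \(M\) follows from continuity of \(G\) on the compact set \(K\), although the statement already assumes it.
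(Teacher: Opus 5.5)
Your proposal is correct and matches the paper's proof: you use the energy identity from Proposition~\ref{prop:energy}, invert the Loewner bounds to \(M^{-1}I\preceq G(x)^{-1}\preceq m^{-1}I\), evaluate the quadratic form at \(v=\grad f(x(t))\), and then multiply by \(-1\). Your remarks that \(x(t)\in K\) is what licenses the bounds at each time, and that \(m>0\) is needed for the inversion, are accurate but do not change the argument.
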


\begin{proof}
From Proposition~\ref{prop:energy},
\[
\frac{\dd}{\dd t}f(x(t))
= -\grad f(x(t))^\top G(x(t))^{-1}\grad f(x(t)).
\]
The bounds
\(
mI \preceq G(x)\preceq MI
\)
imply, after inversion,
\(
M^{-1}I \preceq G(x)^{-1} \preceq m^{-1}I
\)
for every \(x\in K\). Hence, for \(v=\grad f(x(t))\),
\[
\frac{1}{M}\norm{v}^2
\le v^\top G(x(t))^{-1}v
\le \frac{1}{m}\norm{v}^2.
\]
Multiplying by \(-1\) and using the energy identity gives \eqref{eq:dissipation-bounds}.
\end{proof}

Proposition~\ref{prop:metric-comparison} shows that \(g\) modulates the dissipation rate through the inverse Hessian \(G^{-1}\). The convex part is therefore not only a modeling artifact; it is a \emph{preconditioner} encoded at the level of geometry.

The next example makes Proposition~\ref{prop:metric-comparison} concrete on a nonconvex objective: by changing the DC decomposition while keeping \(f\) fixed, one changes the state-dependent metric \(\Hess g\), and hence the resulting continuous DCA trajectory.

\begin{example}
\label{ex:doublewell}
Consider the nonconvex objective
\[
f(x)=\frac{1}{4}(x_1^4+x_2^4)-\frac{1}{2}(x_1^2+x_2^2),
\qquad x=(x_1,x_2)\in\R^2,
\]
which is the standard double-well potential in each coordinate. For any diagonal matrix
\[
Q=\mathrm{diag}(q_1,q_2),\qquad q_1,q_2>0,
\]
consider the DC decomposition
\[
\qquad
g_Q(x)=\frac{1}{4}(x_1^4+x_2^4)+\frac{1}{2}x^\top Qx,
\qquad
h_Q(x)=\frac{1}{2}x^\top(Q+I)x.
\]
Then \(f=g_Q-h_Q\), the function \(h_Q\) is convex, and
\[
\Hess g_Q(x)=
\begin{pmatrix}
3x_1^2+q_1 & 0 \\
0 & 3x_2^2+q_2
\end{pmatrix}
\succeq Q \succ 0,
\]
so \(g_Q\) is strongly convex. The continuous DCA flow therefore becomes
\[
\bigl(3x_1^2+q_1\bigr)\dot x_1 = x_1-x_1^3,
\qquad
\bigl(3x_2^2+q_2\bigr)\dot x_2 = x_2-x_2^3.
\]
Hence the same nonconvex objective generates different continuous dynamics as \(Q\) varies. For example, if \(q_1\neq q_2\) and \(x_1(0)=x_2(0)\in(0,1)\), then initially
\[
\dot x_1(0)=\frac{x_1(0)-x_1(0)^3}{3x_1(0)^2+q_1},
\qquad
\dot x_2(0)=\frac{x_1(0)-x_1(0)^3}{3x_1(0)^2+q_2},
\]
so the two coordinates evolve at different speeds. In particular, unless \(q_1=q_2\), the trajectory immediately leaves the diagonal \(x_1=x_2\). Thus even for a nonconvex objective with multiple wells, the choice of DC decomposition changes the continuous DCA path through the metric \(\Hess g_Q\).
\end{example}

Example~\ref{ex:doublewell} has an immediate design interpretation. Replacing \(g\) by \(g+\phi\) and \(h\) by \(h+\phi\), where \(\phi\) is convex and smooth, leaves the objective unchanged but replaces the metric \(G\) by \(G+\Hess\phi\). This creates a family of admissible flows indexed by convex regularizers \(\phi\). In that sense, choosing a DC decomposition is tantamount to choosing a Hessian metric in which steepest descent is performed.

\section{Bregman Geometry and Outlook on Nonsmooth Extensions}
\label{sec:nonsmooth}

This section has a more limited purpose than the preceding ones. First, it places the smooth continuous DCA flow into the Bregman-geometric language associated with the convex component \(g\). Second, it explains why the same dual-coordinate viewpoint points toward a nonsmooth extension, while also clarifying that a rigorous nonsmooth theory lies beyond the scope of the present work. The main contributions of the paper remain the smooth-flow derivation, its convergence analysis, and the resulting metric interpretation of DC decomposition quality.

The metric \(G=\Hess g\) is the infinitesimal form of the Bregman divergence generated by \(g\),
\[
D_g(z,x)=g(z)-g(x)-\ip{\grad g(x)}{z-x}.
\]
Indeed, for fixed \(x\) and small \(\xi\), Taylor expansion gives
\[
g(x+\xi)
=g(x)+\ip{\grad g(x)}{\xi}+\frac{1}{2}\xi^\top G(x)\xi+o(\norm{\xi}^2).
\]
Substituting this into the definition of \(D_g\) yields
\[
D_g(x+\xi,x) = \frac{1}{2}\xi^\top G(x)\xi + o(\norm{\xi}^2).
\]
Thus the quadratic form induced by the Hessian metric is precisely the second-order approximation of the Bregman divergence generated by \(g\). Equivalently, if one considers the local model
\[
\xi \mapsto \ip{\grad f(x)}{\xi} + \frac{1}{2}\xi^\top G(x)\xi,
\]
then its minimizer is
\[
\xi^\star = -G(x)^{-1}\grad f(x),
\]
which is exactly the instantaneous velocity prescribed by \eqref{eq:continuous-dca}. Therefore the continuous DCA flow is the local steepest-descent system associated with the Bregman geometry of the convex component. This is compatible with the recent observation that DCA admits a Bregman proximal interpretation \cite{FaustFawziSaunderson2023,Eckstein1998}.

The dual ODE \eqref{eq:dual-ode} sharpens this connection. Writing \(x=\grad g^*(y)\), where \(g^*\) is the Fenchel conjugate of \(g\), the flow becomes
\begin{equation}
\label{eq:dual-flow-conjugate}
\dot y = \grad h(\grad g^*(y)) - y,
\qquad
x = \grad g^*(y).
\end{equation}
This is a mirror-type relaxation in the \(g\)-dual coordinate: the state variable is no longer the primal point \(x\), but the dual coordinate \(y=\grad g(x)\), and the map \(\grad g^*\) transports the dynamics back to the primal space. Classical DCA is the full-step Euler discretization of \eqref{eq:dual-flow-conjugate}; the damped variant introduced in this paper is its small-step discretization. This dual formulation provides a direct bridge between discrete DCA, Bregman geometry, and continuous metric dynamics.

This also connects the present Bregman discussion with the decomposition-quality theme developed earlier. Changing the convex component \(g\) changes not only the Hessian metric \(G=\Hess g\), and hence the local and global rate mechanisms identified in Section~\ref{sec:analysis}, but also the underlying Bregman divergence \(D_g\) that defines the local geometry of the dual and primal formulations. From this perspective, the quality of a DC decomposition is simultaneously a metric-design problem and a Bregman-geometry selection problem: one is choosing both the metric in which the flow dissipates and the divergence in which the associated local model is formed.

For nonsmooth DC problems, the Hessian metric is unavailable, but the dual-coordinate picture still suggests plausible differential inclusions. If \(g\) remains Legendre while \(h\) is merely proper, convex, and lower semicontinuous, one is naturally led to
\begin{equation}
\label{eq:nonsmooth-dual}
\dot y(t) + y(t) \in \partial h(\grad g^*(y(t))),
\qquad
x(t)=\grad g^*(y(t)).
\end{equation}
Equivalently, in primal variables one expects a Bregman-gradient or primal-dual inclusion rather than a classical ODE. This observation aligns with Bregman proximal theory \cite{Eckstein1998} and with recent nonsmooth DC splitting methods \cite{BanertBot2019}. However, turning \eqref{eq:nonsmooth-dual} into a genuine theory requires several ingredients that are not needed in the smooth setting: one must choose an appropriate notion of solution, establish well-posedness for the resulting differential inclusion, control the regularity of the dual-primal map through \(g^*\), and recover an analogue of the exact DCA descent mechanism at the continuous level. These issues are substantial enough to warrant a separate treatment. Accordingly, \eqref{eq:nonsmooth-dual} is not presented here as a theorem of the paper, but as a mathematically natural candidate for future work suggested by the smooth theory developed here.

\section{Conclusion}
\label{sec:conclusion}

This paper shows that DCA admits a natural continuous-time representation once it is written in the appropriate coordinates. The damped variant introduced here plays two roles: for fixed \(\eta\in(0,1)\), it defines a Bregman-regularized DCA step with monotone descent, asymptotic stationarity, KL convergence under standard boundedness assumptions, and a global linear rate under a metric DC-PL condition; as \(\eta\downarrow 0\), it provides the small-step bridge from the discrete algorithm to the limiting flow. It should therefore be viewed not only as a technical device for the continuous limit, but also as a discrete DCA variant with its own convergence theory. In the dual variable \(y=\grad g(x)\), DCA is a fixed-point method and, at the same time, a full-step Euler discretization of a nonlinear autonomous ODE. Pulling the ODE back to primal coordinates yields a Hessian-Riemannian gradient system whose metric is generated by the convex part of the decomposition. This perspective accounts for descent, asymptotic stationarity, global rate estimates under metric relative error bounds, KL convergence, local exponential stability, and decomposition sensitivity within a single framework. It also shows that there is no universal optimal relaxation parameter: the strongest provable global metric-PL guarantee furnished by the damped-scheme analysis is obtained by the half-relaxed scheme, whereas the best local linearized rate near a nondegenerate minimum is obtained by the full-step scheme.

From the viewpoint of DCA theory, the value of this continuous-time picture is not only that it produces a new flow, but also that it clarifies why several familiar convergence properties of DCA hold. The objective becomes an explicit Lyapunov function, criticality of limit points follows from metric dissipation, metric relative error bounds appear as the natural global counterparts of Euclidean error-bound and Polyak-Lojasiewicz conditions, and KL geometry upgrades asymptotic criticality to full-trajectory convergence exactly as in the modern discrete theory. In this way, the flow organizes the convergence analysis of DCA in geometric terms rather than simply restating it in continuous time.

The same viewpoint also turns the long-recognized but poorly formalized issue of DC decomposition quality into a geometric question. The convex component \(g\) does not merely certify a valid splitting; it supplies the metric in which the dynamics evolves. The local analysis shows that this metric directly controls the linearized rate through the spectrum of \( \Hess g(x^\star)^{-1}\Hess f(x^\star)\). Thus a high-quality DC decomposition should be understood as one that convexifies the model while inducing a metric well aligned with the local curvature of \(f\). This gives a concrete guideline for decomposition design and suggests that metric selection is a natural language for further progress on decomposition quality in DCA.

The present analysis also suggests several continuations. One is to move from the global-versus-local parameter tradeoff identified here toward adaptive relaxation and optimal metric-selection rules for constructing high-quality DC decompositions. Beyond the smooth setting treated here, two further directions are accelerated or inertial DC flows whose discretizations remain algorithmically meaningful, and a rigorous theory for nonsmooth Legendre-Bregman differential inclusions capable of preserving the characteristic DCA descent structure.

\section*{Acknowledgments}
The author was supported by the National Natural Science Foundation of China (Grant No.~42450242) and the Beijing Overseas High-Level Talent Program. The author gratefully acknowledges institutional support from the Beijing Institute of Mathematical Sciences and Applications (BIMSA).

\section*{Data availability}
No datasets were generated for this theoretical study.

\section*{Conflict of interest}
The author declares no conflict of interest.


\begin{thebibliography}{10}
\providecommand{\url}[1]{{#1}}
\providecommand{\urlprefix}{URL }
\expandafter\ifx\csname urlstyle\endcsname\relax
  \providecommand{\doi}[1]{DOI~\discretionary{}{}{}#1}\else
  \providecommand{\doi}{DOI~\discretionary{}{}{}\begingroup
  \urlstyle{rm}\Url}\fi

\bibitem{AbbaszadehpeivastiDeKlerkZamani2024}
Abbaszadehpeivasti, H., de~Klerk, E., Zamani, M.: On the rate of convergence of
  the difference-of-convex algorithm ({DCA}).
\newblock Journal of Optimization Theory and Applications \textbf{202},
  475--496 (2024).
\newblock Https://doi.org/10.1007/s10957-023-02199-z

\bibitem{AbsilMahonySepulchre2008}
Absil, P.A., Mahony, R., Sepulchre, R.: Optimization Algorithms on Matrix
  Manifolds.
\newblock Princeton University Press, Princeton, NJ (2008)

\bibitem{AttouchBolteSvaiter2013}
Attouch, H., Bolte, J., Svaiter, B.F.: Convergence of descent methods for
  semi-algebraic and tame problems: proximal algorithms, forward-backward
  splitting, and regularized {Gauss-Seidel} methods.
\newblock Mathematical Programming \textbf{137}(1--2), 91--129 (2013).
\newblock Https://doi.org/10.1007/s10107-011-0484-9

\bibitem{BanertBot2019}
Banert, S., Bo{\c{t}}, R.I.: A general double-proximal gradient algorithm for
  d.c. programming.
\newblock Mathematical Programming \textbf{178}, 301--326 (2019).
\newblock Https://doi.org/10.1007/s10107-018-1292-2

\bibitem{BolteDaniilidisLewisShiota2007}
Bolte, J., Daniilidis, A., Lewis, A., Shiota, M.: Clarke subgradients of
  stratifiable functions.
\newblock SIAM Journal on Optimization \textbf{18}(2), 556--572 (2007).
\newblock Https://doi.org/10.1137/060670080

\bibitem{Deuflhard2011}
Deuflhard, P.: Newton Methods for Nonlinear Problems: Affine Invariance and
  Adaptive Algorithms.
\newblock Springer, Berlin, Heidelberg (2011)

\bibitem{Eckstein1998}
Eckstein, J.: Approximate iterations in {Bregman}-function-based proximal
  algorithms.
\newblock Mathematical Programming \textbf{83}, 113--123 (1998).
\newblock Https://doi.org/10.1007/BF02680553

\bibitem{FaustFawziSaunderson2023}
Faust, O., Fawzi, H., Saunderson, J.: A {Bregman} divergence view on the
  difference-of-convex algorithm.
\newblock In: Proceedings of The 26th International Conference on Artificial
  Intelligence and Statistics, \emph{Proceedings of Machine Learning Research},
  vol. 206, pp. 3427--3439 (2023).
\newblock Available at https://proceedings.mlr.press/v206/faust23a.html

\bibitem{KarimiNutiniSchmidt2016}
Karimi, H., Nutini, J., Schmidt, M.: Linear convergence of gradient and
  proximal-gradient methods under the {Polyak-Lojasiewicz} condition.
\newblock In: Machine Learning and Knowledge Discovery in Databases, pp.
  795--811. Springer (2016)

\bibitem{KricheneBayenBartlett2015}
Krichene, W., Bayen, A., Bartlett, P.L.: Accelerated mirror descent in
  continuous and discrete time.
\newblock In: Advances in Neural Information Processing Systems 28, pp.
  2845--2853 (2015)

\bibitem{Kurdyka1998}
Kurdyka, K.: On gradients of functions definable in o-minimal structures.
\newblock Annales de l'Institut Fourier \textbf{48}(3), 769--783 (1998).
\newblock Https://doi.org/10.5802/aif.1638

\bibitem{LeThiPham2018}
Le~Thi, H.A., Pham, D.T.: Dc programming and {DCA}: thirty years of
  developments.
\newblock Mathematical Programming \textbf{169}(1), 5--68 (2018).
\newblock Https://doi.org/10.1007/s10107-018-1235-y

\bibitem{LeThiDinhPham2018}
Le~Thi, H.A., Pham, D.T., Pham, T.V.: Convergence analysis of
  difference-of-convex algorithm with subanalytic data.
\newblock Journal of Optimization Theory and Applications \textbf{179}(1),
  103--126 (2018).
\newblock Https://doi.org/10.1007/s10957-018-1345-y

\bibitem{Niu2022}
Niu, Y.S.: On the convergence analysis of {DCA} (2022).
\newblock \urlprefix\url{https://arxiv.org/abs/2211.10942}

\bibitem{PhamLeThi1997}
Pham, D.T., Le~Thi, H.A.: Convex analysis approach to dc programming: theory,
  algorithms and applications.
\newblock Acta Mathematica Vietnamica \textbf{22}(1), 289--355 (1997)

\bibitem{PhamLeThi1998}
Pham, D.T., Le~Thi, H.A.: A dc optimization algorithm for solving the
  trust-region subproblem.
\newblock SIAM Journal on Optimization \textbf{8}(2), 476--505 (1998).
\newblock Https://doi.org/10.1137/S1052623493254025

\bibitem{Polyak1963}
Polyak, B.T.: Gradient methods for minimizing functionals.
\newblock USSR Computational Mathematics and Mathematical Physics
  \textbf{3}(4), 864--878 (1963).
\newblock Https://doi.org/10.1016/0041-5553(63)90382-3

\bibitem{SuBoydCandes2016}
Su, W., Boyd, S., Cand{\`e}s, E.J.: A differential equation for modeling
  {Nesterov}'s accelerated gradient method: Theory and insights.
\newblock Journal of Machine Learning Research \textbf{17}(153), 1--43 (2016)

\end{thebibliography}
\end{document}